\theoremstyle{plain}
\newtheorem{lemma}{Lemma}[section]
\newtheorem{proposition}[lemma]{Proposition}
\newtheorem{theorem}[lemma]{Theorem}
\newtheorem{corollary}[lemma]{Corollary}
\newtheorem{problem}[lemma]{Problem}
\theoremstyle{definition}
\newtheorem{remark}[lemma]{Remark}
\newtheorem{example}[lemma]{Example}
\title{Generating function on epimorphisms between $2$-bridge knot groups} 
\author{Masaaki Suzuki}
\address{Department of Frontier Media Science, Meiji University}
\email{macky@fms.meiji.ac.jp}
\begin{document}

\begin{abstract}
We have the generating function which 
determines the number of $2$-bridge knot groups admitting epimorphisms 
onto the knot group of a given $2$-bridge knot, in terms of crossing number. 
In this paper, we will refine this formula by taking account into genus as well as crossing number.  
Next, we determine the number of epimorphisms between fibered $2$-bridge knot groups. 
Moreover, we discuss degree one maps and $2$-bridge knots uknotting number one. 
\end{abstract}

\maketitle
\section{Introduction}\label{sect:intro}

Let $K$ be a knot and $G(K)$ the knot group, namely, 
the fundamental group of the exterior of $K$ in $S^3$. 
Recently, many papers have studied epimorphisms between knot groups. 
One of the most important problems (called Simon's conjecture),   
every knot group maps onto at most finitely many knot groups, 
is proved affirmatively by  Agol and Liu in \cite{agolliu}. 

In this paper, we focus on $2$-bridge knots. 
Boileau, Boyer, Reid, and Wang in \cite{BBRW} showed that 
Simon's conjecture is true for $2$-bridge knots, 
previous to \cite{agolliu}. 
In \cite{BBRW}, they also showed that 
if a $2$-bridge knot group admits an epimorphism onto another knot group $G(K)$, 
then $K$ is a $2$-bridge knot or the trivial knot. 
Ohtsuki, Riley, and Sakuma in \cite{ORS} gave us  
a systematic construction of epimorphisms between $2$-bridge knot groups. 
Conversely, as a consequence of Agol's result announced in \cite{agol}, 
all epimorphisms between $2$-bridge knot groups arise from this construction. 
Besides, Aimi, Lee, Sakai, and Sakuma also gave an alternative proof in \cite{ALSS}.

By using these fundamental results, 
the previous paper \cite{SZK} showed the relationship between 
the existence of an epimorphism between $2$-bridge knot groups 
and their crossing numbers. 
This coincides with the results of \cite{kitano-suzuki1} and \cite{HKMS}, 
which enumerate all the epimorphisms with up to $11$ crossings. 
Moreover, the same paper \cite{SZK} formulated the generating function 
which determines the number of $2$-bridge knots $K$ 
admitting epimorphisms from the knot group $G(K)$ onto the knot group of a given $2$-bridge knot. 
In \cite{suzukitran}, 
we obtained an explicit condition on genera of $2$-bridge knots 
such that there exists an epimorphism between their knot groups. 
 
In this paper, we will refine the previous generating function in terms of genus as well as crossing number.  
Next, the number of epimorphisms between fibered $2$-bridge knot groups 
will be determined by an explicit formula. 
Furthermore, we will discuss epimorphisms induced by degree one maps 
between $2$-bridge knot groups 
and will give many examples of epimorphisms between $2$-bridge knot groups 
with unknotting number one. 

Throughout this paper, we do not distinguish a knot from its mirror image, 
since their knot groups are isomorphic and we discuss epimorphisms between knot groups. 
We do not consider an epimorphism from a knot group onto itself, 
namely, we deal with only non-isomorphic epimorphisms.  
The numberings of the knots follow Rolfsen's book \cite{rolfsen} and  
the data of knots can be obtained by KnotInfo \cite{CL}. 


\section{$2$-bridge knot and continued fraction expansion}\label{sect:cfe}

In this section, we recall some well-known results on $2$-bridge knots. 
See \cite{bzh}, \cite{murasugi} in detail, for example. 

A $2$-bridge knot corresponds to a rational number $r = q/p \in {\mathbb Q} \cap (0,1)$, where $p$ is odd. 
Then we denote by $K(q/p)$ such a $2$-bridge knot. 
A rational number $q/p$ can 
be expressed as a continued fraction expansion:
\[
 \frac{q}{p} = [a_1,a_2, \ldots, a_{m-1},a_m] = 
\frac{1}{a_1 + \frac{1}{a_2 + \frac{1}{\ddots \frac{1}{a_{m-1} + \frac{1}{a_m}}}}} .
\]

Schubert classified $2$-bridge knots as follows. 

\begin{theorem}[Schubert] 
Let $K(q/p)$ and $K(q'/p')$ be $2$-bridge knots. 
These knots are equivalent if and only if 
the following conditions hold. 
\begin{enumerate}
\item $p = p'$. 
\item Either $q \equiv \pm q' \pmod{p}$ or $q q' \equiv \pm 1 \pmod{p}$. 
\end{enumerate}
\end{theorem}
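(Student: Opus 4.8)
The plan is to prove the two implications separately, dispatching the easier sufficiency (``if'') direction by explicit continued-fraction manipulations and the harder necessity (``only if'') direction through the $2$-fold branched cover.

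For sufficiency, I would realize each listed arithmetic relation between $(q,p)$ and $(q',p')$ by an ambient isotopy of the corresponding $4$-plats (Schubert normal forms). Drawing $K(q/p)$ as the plat closure associated to a continued fraction $[a_1,\ldots,a_m]$, the relation $q \equiv q' \pmod{p}$ merely reflects the freedom $q \mapsto q+p$ in the choice of representative; the relation $q \equiv -q' \pmod{p}$, i.e.\ $q' = p-q$, corresponds to passing to the mirror image, which we have agreed to identify with the original knot; and the relation $qq' \equiv \pm 1 \pmod{p}$ corresponds to reversing the continued fraction $[a_1,\ldots,a_m]$, reading the plat from the other end, since reversal inverts the numerator modulo $p$ up to sign. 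Each of these is effected by a finite sequence of flypes and rotations of the underlying rational tangle, so sufficiency reduces to these elementary tangle isotopies.

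For necessity, I would pass to the double branched cover. The key geometric input is that the double cover of $S^3$ branched along $K(q/p)$ is the lens space $L(p,q)$, which follows from the genus-one Heegaard splitting induced by the bridge sphere. If $K(q/p)$ and $K(q'/p')$ are equivalent, then these covers are homeomorphic, so $L(p,q) \cong L(p',q')$. Invoking the classification of lens spaces up to unoriented homeomorphism --- provable via Reidemeister torsion (the Reidemeister--Franz theorem) --- yields exactly $p = p'$ together with $q' \equiv \pm q^{\pm 1} \pmod{p}$, and this last disjunction is precisely ``$q \equiv \pm q' \pmod{p}$ or $qq' \equiv \pm 1 \pmod{p}$.''

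The main obstacle is the logical gap in the necessity direction: a homeomorphism of branched covers does not a priori descend to an equivalence of the branch loci, and indeed for general knots the double branched cover fails to determine the knot. To close this, I would need that the covering involution on $L(p,q)$ is unique up to conjugation by homeomorphisms, so that any homeomorphism $L(p,q) \cong L(p',q')$ can be isotoped to intertwine the two involutions and hence project to a homeomorphism of pairs $(S^3, K(q/p)) \cong (S^3, K(q'/p'))$. This rigidity of the involution is the technical heart of the argument; it can be extracted from the equivariant geometrization of spherical space forms, or, in Schubert's original spirit, from a direct analysis of bridge position. I expect verifying this uniqueness of the involution --- rather than the arithmetic bookkeeping on continued fractions --- to be the hard part.
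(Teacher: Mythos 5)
The paper offers no proof of this statement to compare against: it is Schubert's classical classification theorem, quoted as known with the references \cite{bzh}, \cite{murasugi}. Judged on its own merits, your outline is the standard modern proof and its essential steps are sound. Sufficiency by explicit plat/tangle isotopies is correct: with $q,q'\in(0,p)$ the relation $q\equiv q'\pmod p$ forces $q=q'$; $q'\equiv -q$ gives the mirror image, which the paper deliberately identifies with the knot itself; and $qq'\equiv\pm1\pmod p$ is realized by turning the $4$-plat end for end, since reversing $[a_1,\ldots,a_m]$ to $[a_m,\ldots,a_1]$ replaces the numerator $q$ by $q''$ with $qq''\equiv(-1)^{m-1}\pmod p$. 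Necessity via the double branched cover $L(p,q)$ and the Reidemeister--Franz classification of lens spaces up to (unoriented) homeomorphism is likewise the standard route.

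However, the ``main obstacle'' you identify is a phantom, and locating it in the necessity direction reveals a confusion about the logical structure of your own argument. Necessity runs: knots equivalent $\Rightarrow$ covers homeomorphic $\Rightarrow$ arithmetic conditions. The first implication needs only that an equivalence of pairs $(S^3,K)\cong(S^3,K')$ \emph{lifts} to the double branched covers, which is automatic: the double branched cover is canonically attached to the pair, corresponding to the unique epimorphism $\pi_1(S^3\setminus K)\to{\mathbb Z}/2$ (unique because $H_1(S^3\setminus K)\cong{\mathbb Z}$), so any homeomorphism of pairs respects it. No descent from covers to branch loci occurs anywhere in this direction. The rigidity of the covering involution (Hodgson--Rubinstein, or equivariant geometrization) is needed precisely when one wants the \emph{converse} implication, ``homeomorphic covers $\Rightarrow$ equivalent knots,'' i.e.\ if one were to prove \emph{sufficiency} through branched covers --- but your sufficiency argument goes through tangle isotopies and never touches the covers. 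So your proof is complete without that lemma; either delete the rigidity discussion, or recognize that it belongs to an alternative proof of the other implication, not to a gap in the one you gave.
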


By using continued fraction expansion, we have 
\[
K([a_1,a_2,\ldots, a_m]) = K([\varepsilon a_m, \ldots ,\varepsilon a_2, \varepsilon a_1]) 
\]
where $\varepsilon = (-1)^{m-1}$. 
Moreover, the mirror image of $K([a_1,a_2,\ldots, a_m])$ is expressed as $K([- a_1,- a_2,\ldots, - a_m])$. 
In this paper, we do not distinguish between a knot and its mirror image, then 
\begin{align*}
&K([a_1,a_2,\ldots, a_m]), K([-a_1,-a_2,\ldots, -a_m]), K([a_m, \ldots ,a_2, a_1]), K([- a_m, \ldots ,- a_2, - a_1]) 
\end{align*}
can be considered as the same knot. 
In particular, we may assume that $a_1>0$ in $K([a_1,a_2,\ldots, a_m])$. 
A sequence $(a_1,a_2,\ldots,a_m)$ arisen from a continued fraction expansion is called {\it symmetric}, 
if $(a_1,a_2,\ldots,a_m) = (a_m,\ldots,a_2,a_1)$ for $a_m > 0$, 
or if $(a_1,a_2,\ldots,a_m) = (-a_m,\ldots,-a_2,-a_1)$ for $a_m < 0$.

We denote by $c(K)$ and $g(K)$ the crossing number of a knot $K$ and the genus of $K$ respectively. 
If a rational number $r$ is expressed as $[a_1,a_2,\ldots,a_m]$ where $a_i > 0$ and $a_1,a_m \geq 2$, 
then the continued fraction expansion is called {\it standard}. 
The crossing number of $K(r)$ for the standard continued fraction $r = [a_1,a_2,\ldots, a_m]$ 
is given by 
\[
 c(K([a_1,a_2,\ldots,a_m])) = \sum_{i=1}^m a_i . 
\]
Besides, if all the $a_i$'s are non-zero even numbers in $[a_1,a_2,\ldots,a_m]$, 
then the continued fraction expansion is called {\it even}. 
For a $2$-bridge knot $K(r)$, 
an even continued fraction expansion for $r$ is unique up to symmetry 
and the length, which is the number of the components, 
is even (see \cite{GHS}, \cite{HMS} detail).  
The genus of $K(r)$ for the even continued fraction $r = [a_1,a_2,\ldots, a_{2m}]$ 
is given by $m$.

\section{Epimorphisms between $2$-bridge knot groups}

We have the following remarkable result about epimorphisms between $2$-bridge knot groups. 
Namely, the rational numbers for these $2$-bridge knots have a certain relationship. 

\begin{theorem}[Ohtsuki-Riley-Sakuma \cite{ORS}, Agol \cite{agol}, Aimi-Lee-Sakai-Sakuma \cite{ALSS}]\label{thm:ors}
Let $K(r), K(\tilde{r})$ be $2$-bridge knots, 
where $r = [a_1,a_2,\ldots,a_m]$. 
There exists an epimorphism $\varphi : G(K(\tilde{r})) \to G(K(r))$, 
if and only if $\tilde{r}$ can be written as 
\[
\tilde{r} = 
[\varepsilon_1 {\bf a}, 2 c_1, 
\varepsilon_2 {\bf a}^{-1}, 2 c_2, 
\varepsilon_3 {\bf a}, 2 c_3, 
\varepsilon_4 {\bf a}^{-1}, 2 c_4, 
\ldots, 
\varepsilon_{2n} {\bf a}^{-1}, 2 c_{2n}, \varepsilon_{2n+1} {\bf a}] , 
\]
where ${\bf a} = (a_1, a_2,\ldots,a_m), {\bf a}^{-1} = (a_m, a_{m-1},\ldots,a_1)$, 
$\varepsilon_i = \pm 1 \, \, (\varepsilon_1 = 1)$, and $c_i \in {\mathbb Z}$. 
\end{theorem}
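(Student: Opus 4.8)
The statement packages together two implications of very different character, so the plan is to treat them separately, since only one of them carries the real difficulty.

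For the sufficiency (``if'') direction I would reconstruct the explicit map of Ohtsuki--Riley--Sakuma \cite{ORS}. The starting point is the standard two-generator presentation of a $2$-bridge knot group, in which the generators are meridians $x,y$ sitting at the two bridges and the single defining relator is a word $w_r$ in $x,y$ read off from the continued fraction $r=[a_1,\ldots,a_m]$. Given $\tilde r$ in the displayed normal form, I would define a homomorphism $\varphi\colon G(K(\tilde r))\to G(K(r))$ block by block, sending the meridian generators coming from each copy of ${\bf a}$ or ${\bf a}^{-1}$ to meridians of $K(r)$ (or their inverses/conjugates) according to the signs $\varepsilon_i$. The point of reversing a block to ${\bf a}^{-1}$ and of inserting the even connectors $2c_i$ is precisely to force the relator of $K(\tilde r)$ to be carried to a conjugate of the relator of $K(r)$, or to a trivial word; checking this is a finite but bookkeeping-heavy computation in the free group, which I would organise by induction on the number $n$ of repeated blocks. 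Surjectivity is then immediate, since the images already contain the two meridians that generate $G(K(r))$. Conceptually this $\varphi$ is realised by a branched fold map of the two $2$-bridge spheres, and I would use that geometric picture to motivate and cross-check the algebra.

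The necessity (``only if'') direction is the genuinely hard half, and here my plan follows the representation-theoretic route of Agol \cite{agol} and Aimi--Lee--Sakai--Sakuma \cite{ALSS}. By \cite{BBRW} the target of any such epimorphism is forced to be a $2$-bridge knot, and one first shows that the epimorphism may be taken to be \emph{meridional}, i.e. it sends a meridian of $K(\tilde r)$ to a conjugate of a meridian of $K(r)$. Composing $\varphi$ with the discrete faithful representation $\rho\colon G(K(r))\to \mathrm{PSL}(2,\mathbb C)$ (the hyperbolic holonomy, the few $2$-bridge torus knots being handled directly) then yields a \emph{parabolic} representation $\rho\circ\varphi$ of $G(K(\tilde r))$, because meridians go to parabolic elements. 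The heart of the matter is the classification of parabolic representations of $2$-bridge knot groups: each such representation is encoded combinatorially, by the zeros of a Riley-type polynomial and equivalently by a path in the associated tree of representations, and one must read off from this data that $\tilde r$ admits exactly the alternating continued fraction expansion in ${\bf a}$ and ${\bf a}^{-1}$ with even separators.

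The step I expect to be the main obstacle is precisely this last one: controlling the \emph{image} of the representation. Producing parabolic representations of $G(K(\tilde r))$ is comparatively easy; the difficulty is to prove that requiring the image to be again a $2$-bridge knot group is so restrictive that it pins the continued fraction of $\tilde r$ down to the Ohtsuki--Riley--Sakuma normal form, including the sign pattern $\varepsilon_i$ and the evenness of the connectors $2c_i$. This is where the fine combinatorics of $2$-bridge continued fractions must be matched against the geometry of the character variety, and it is the part that required the work of \cite{agol} and \cite{ALSS} to settle completely. I would therefore organise the write-up so that the construction of the first paragraph supplies ``at least these'' epimorphisms and this classification supplies ``no others.''
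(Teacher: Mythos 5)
You should know at the outset that the paper contains no proof of this statement: it is quoted as a background theorem, with the sufficiency attributed to Ohtsuki--Riley--Sakuma \cite{ORS} and the necessity to Agol \cite{agol} and Aimi--Lee--Sakai--Sakuma \cite{ALSS}, and it is then used as a black box throughout. So there is no ``paper's own proof'' to compare yours against; the only meaningful comparison is with the cited works themselves.

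Measured against those, your outline does reproduce the correct architecture: the ``if'' direction by the explicit ORS construction (which is meridional and is realized geometrically by a map of knot exteriors), and the ``only if'' direction by composing a putative epimorphism with the holonomy representation of $G(K(r))$ (the $(2,p)$ torus-knot targets handled separately) and invoking the classification of non-free Kleinian groups generated by two parabolic elements. Two caveats, one technical and one structural. Technically, the parabolicity of $\rho\circ\varphi$ is not automatic: one needs that $\varphi$ carries a meridian of $K(\tilde r)$ to a \emph{peripheral} element of $G(K(r))$, which comes out of \cite{BBRW} (such epimorphisms are induced by nonzero-degree maps, hence preserve peripheral structure); note that ``peripheral'' is what the argument needs, whereas full meridionality is better viewed as an output of the classification than as an easy first reduction, which is how your phrasing presents it. Structurally, and decisively: both halves of your proposal stop exactly where the real work begins. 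The verification that the block-by-block assignment kills the relator of $G(K(\tilde r))$ is asserted rather than carried out, and the step you yourself identify as the main obstacle --- that discreteness and non-freeness of a two-parabolic-generator image force the displayed normal form on $\tilde r$ --- is the entire content of \cite{agol} and \cite{ALSS}. As written, your text is an accurate roadmap of the known proof that ultimately defers to the same citations the paper makes; that is a legitimate way to \emph{use} the theorem, as the paper does, but it is not an independent proof of it.
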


Moreover, it is also shown that 
epimorphisms between $2$-bridge knot groups are always meridional, 
though the previous paper \cite{chasuzuki} proves that 
some epimorphisms between knot groups are not always meridional. 
Here an epimorphism is called {\it meridional} if it sends meridian to meridian. 

If a rational number $\tilde{r}$ is expressed as 
\[
 \tilde{r} = 
[\varepsilon_1 {\bf a}, 2 c_1, 
\varepsilon_2 {\bf a}^{-1}, 2 c_2, 
\varepsilon_3 {\bf a}, 2 c_3, 
\varepsilon_4 {\bf a}^{-1}, 2 c_4, 
\ldots, 
\varepsilon_{2n} {\bf a}^{-1}, 2 c_{2n}, \varepsilon_{2n+1} {\bf a}] , 
\]
then we say that 
$\tilde{r}$ has a continued fraction expansion of {\it type} $2n+1$ with respect to ${\bf a} = (a_1, a_2,\ldots,a_m)$. 
We may assume that $c_i \neq 0$ or $\varepsilon_i \cdot \varepsilon_{i+1} \neq -1$, see \cite{SZK} in detail. 

We recall the following three results from the previous papers \cite{SZK} and \cite{suzukitran}. 
One of main purposes of this paper is to refine Theorem \ref{thm:generatingfunction} below 
by taking account into genus not only crossing number. 
Proposition \ref{thm:crossingnumber} and Proposition \ref{thm:suzukitran} are useful to prove this refinement. 

For a given $2$-bridge knot $K(r)$,  
the following generating function gives us the number of $2$-bridge knots $K(\tilde{r})$ 
which admit epimorphisms $\varphi : G(K(\tilde{r})) \to G(K(r))$, 
in terms of $c(K({\tilde r}))$.

\begin{theorem}[\cite{SZK}]\label{thm:generatingfunction}
Let $K(r)$ be a $2$-bridge knot with $c_r$ crossings. 
We take the standard continued fraction expansion  $[a_1,a_2,\ldots,a_m]$ of $r$ 
and define the generating function as follows: 
\[
 f_c(r) = \sum_{n=1}^\infty \sum_{k=0}^\infty \, \bar{f}_c(n,k) \, t^{(2n+1) c_r + k} , 
\]
\begin{itemize}
\item[(A)] if $(a_1,a_2,\ldots,a_m)$ is not symmetric, then 
\[
\bar{f}_c(n,k) = 2^{2n} \, 
\Big(
\begin{array}{c}
2n+ k - 1 \\ k
\end{array}
\Big) ,
\]
\item[(B)] if $(a_1,a_2,\ldots,a_m)$ is symmetric, then 
\[
 \bar{f}_c(n,k) = 
\left\{
\begin{array}{ll}
2^{2n-1} 
\Big(
\begin{array}{c}
2n+ k - 1 \\ k
\end{array}
\Big)
& k \mbox{ : odd} \\
2^{2n-1} 
\Big(
\begin{array}{c}
2n+ k - 1 \\ k
\end{array}
\Big)
+ 
2^{n-1} 
\Big(
\begin{array}{c}
n+ \frac{k}{2} - 1 \\ \frac{k}{2}
\end{array}
\Big)
& 
k \mbox{ : even} \\
\end{array}
\right. , 
\]
\end{itemize}
and $\displaystyle{
\Big(
\begin{array}{c}
a \\ b
\end{array}
\Big) 
= \frac{a!}{b! \, (a-b)!}
}$. 
Then the number of $2$-bridge knots $K$ with $c$ crossings  
which admit epimorphisms $\varphi : G(K) \to G(K(r))$ 
is the coefficient of $t^{c}$ in $f_c(r)$. 
\end{theorem}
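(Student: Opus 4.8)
The plan is to combine the Ohtsuki--Riley--Sakuma description (Theorem \ref{thm:ors}) with Schubert's classification and the crossing number formula from Section \ref{sect:cfe}. By Theorem \ref{thm:ors}, a knot $K(\tilde r)$ admits an epimorphism onto $K(r)$ exactly when $\tilde r$ has a continued fraction expansion of type $2n+1$ with respect to $\mathbf a=(a_1,\dots,a_m)$, and such expansions are parametrized by the admissible data $(\varepsilon_2,\dots,\varepsilon_{2n+1};c_1,\dots,c_{2n})$ with $\varepsilon_1=1$, subject to the nondegeneracy condition ``$c_i\neq 0$ or $\varepsilon_i\varepsilon_{i+1}\neq -1$''. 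So the theorem reduces to counting, for each $n$, the equivalence classes of such data whose knot has crossing number $(2n+1)c_r+k$, where the equivalence is generated by Schubert's moves and the continued fraction symmetries of Section \ref{sect:cfe}.

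First I would pin down the crossing number. Since the crossing number of a $2$-bridge knot is the sum of the entries of its standard (all positive) continued fraction, I would reduce the signed expansion above to standard form. The $2n+1$ blocks $\pm\mathbf a^{\pm1}$ each contribute $c_r=\sum_i a_i$, accounting for the base term $(2n+1)c_r$, while the insertions $2c_i$ together with the sign changes contribute the excess $k\geq 0$. The key structural output, which is the content of Proposition \ref{thm:crossingnumber}, is that $k$ is additive over the $2n$ insertion slots: pairing the $i$-th insertion with the sign of the following block, a slot contributes an even excess through $c_i$ when $\varepsilon_i=\varepsilon_{i+1}$ and an odd excess when $\varepsilon_i\neq\varepsilon_{i+1}$, and each attained value has exactly two preimages in $c_i$ (the nondegeneracy condition being precisely what removes the constant term in the sign-change case).

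Granting this, case (A) becomes a generating function computation. For a single slot the excess generating function is $\tfrac{2}{1-t^2}$ (no sign change, even values) plus $\tfrac{2t}{1-t^2}$ (sign change, odd values), which equals $\tfrac{2}{1-t}$; multiplying over the $2n$ independent slots gives $\bigl(\tfrac{2}{1-t}\bigr)^{2n}=\sum_k 2^{2n}\binom{2n+k-1}{k}\,t^k$. When $\mathbf a$ is not symmetric I would then check, using Schubert's theorem, that distinct admissible data (over all $n$) give distinct knots, so that this tuple count is exactly the number of knots; this injectivity check is the first genuine obstacle, as one must rule out accidental coincidences $q\equiv\pm q'^{\pm1}\pmod p$ among the resulting fractions.

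For case (B), the symmetry of $\mathbf a$ makes the global reversal of the continued fraction an involution $\iota$ on the set of admissible data, and two data related by $\iota$ yield the same knot. By orbit counting the number of knots is $\tfrac{1}{2}(T+F)$, where $T=2^{2n}\binom{2n+k-1}{k}$ and $F$ is the number of $\iota$-fixed (palindromic) data with excess $k$. A palindromic datum is determined by its first half, so its excess is even and splits as $2\cdot(k/2)$ over $n$ paired slots; running the same per-slot computation in the half-excess variable gives $F=2^{n}\binom{n+k/2-1}{k/2}$ for even $k$ and $F=0$ for odd $k$. Substituting into $\tfrac{1}{2}(T+F)$ reproduces exactly the two branches of $\bar f_c(n,k)$ in (B). The main obstacles are the crossing number reduction underlying the additivity claim and, in the symmetric case, correctly describing the fixed-point set of $\iota$ and verifying that $\iota$ accounts for all coincidences among knots; both demand a careful application of Schubert's classification to the type $2n+1$ normal form.
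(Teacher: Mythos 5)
Your proposal is correct and follows essentially the same route as the paper's own treatment: the paper recalls this theorem from \cite{SZK}, and its in-paper analogue (the proof of Theorem \ref{thm:generatingfunction2}) uses exactly your decomposition --- the Ohtsuki--Riley--Sakuma parametrization, additivity of the crossing excess from Proposition \ref{thm:crossingnumber} with two realizations of each slot value, and halving with a palindromic fixed-point correction when $(a_1,\ldots,a_m)$ is symmetric. Your per-slot generating function $\bigl(\tfrac{2}{1-t}\bigr)^{2n}$ and the Burnside count in case (B) are compact repackagings of that argument, and the injectivity step you flag (distinct admissible data, up to reversal, give distinct knots) is likewise deferred to Schubert's theorem and \cite{SZK} by the paper itself.
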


In order to show Theorem \ref{thm:generatingfunction}, 
we determine the crossing number of $K(\tilde{r})$ 
by using a continued fraction expansion for $\tilde{r}$ of type $2n+1$. 

\begin{proposition}[\cite{SZK}]\label{thm:crossingnumber}
Let $[a_1,a_2,\ldots,a_m]$ be a standard continued fraction expansion. 
Suppose that a rational number $\tilde{r}$ has a continued fraction expansion 
of type $2n+1$ with respect to ${\bf a} = (a_1,a_2,\ldots,a_m)$: 
\[
 \tilde{r} = 
[\varepsilon_1 {\bf a}, 2 c_1, 
\varepsilon_2 {\bf a}^{-1}, 2 c_2, 
\varepsilon_3 {\bf a}, 2 c_3, 
\varepsilon_4 {\bf a}^{-1}, 2 c_4, 
\ldots, 
\varepsilon_{2n} {\bf a}^{-1}, 2 c_{2n}, \varepsilon_{2n+1} {\bf a}] . 
\]
We define $\psi (i), \bar{\psi} (i), \bar{c}_i$ for $\tilde{r}$ as follows: 
\[
\psi (i) = 
\left\{
\begin{array}{ll}
1 & \, \, \varepsilon_i \cdot c_i < 0 \\
0 & \, \, \varepsilon_i \cdot c_i \geq 0 
\end{array}
\right. , \quad
\bar{\psi} (i) = 
\left\{
\begin{array}{ll}
1 & \, \, c_i \cdot \varepsilon_{i+1} < 0 \\
0 & \, \, c_i \cdot \varepsilon_{i+1} \geq 0 
\end{array}
\right. , \quad 
\bar{c}_i = 2 |c_i| - \psi (i) - \bar{\psi} (i).
\]
Then $\bar{c}_i\geq 0$ and 
the crossing number of $K(\tilde{r})$ is given by 
\[
 c(K(\tilde{r})) = 
(2n + 1) \sum_{j=1}^m a_j + \sum_{i=1}^{2n} \bar{c}_i. 
\]
\end{proposition}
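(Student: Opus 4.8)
The plan is to compute $c(K(\tilde r))$ by transforming the given type $2n+1$ expansion into a standard continued fraction expansion (all entries positive, first and last $\geq 2$), since for such an expansion the crossing number equals the sum of the entries (Section \ref{sect:cfe}). The key bookkeeping device is the quantity
\[
 N(b_1,\ldots,b_k) = \sum_{j=1}^k |b_j| - \#\{\, j : b_j b_{j+1} < 0 \,\},
\]
the sum of absolute values minus the number of sign changes between consecutive entries. First I would establish, as the main lemma, that $N$ is invariant under the continued fraction moves used to remove negative entries, so that $N$ of any admissible expansion equals the entry sum of the associated standard expansion, namely $c(K(\tilde r))$.

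Two moves suffice. The first is the sign reduction identity
\[
 [\ldots, A, -x, B, \ldots] = [\ldots, A-1, 1, x-1, \overline{B, \ldots}],
\]
where $x \geq 1$ and the bar denotes negation of every entry from $B$ onward; a direct evaluation of both sides, treating the common tail as a single variable, verifies equality of the rational values and hence of the knots by Schubert's theorem. The second is the absorption $[\ldots, a, 0, b, \ldots] = [\ldots, a+b, \ldots]$. I would check that each move leaves $N$ unchanged, the content being that deleting one sign change is exactly compensated by the drop of $1$ in the absolute value sum. Iterating the moves drives the number of sign changes to zero; combined with the reversal and mirror symmetries recalled in Section \ref{sect:cfe}, both of which visibly preserve $N$, one reaches a standard all positive expansion whose entry sum is $c(K(\tilde r))$, proving $c(K(\tilde r)) = N(\tilde r)$.

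It then remains to evaluate $N$ directly on the type $2n+1$ sequence, with no need to run the reduction. The sequence consists of the $2n+1$ blocks $\varepsilon_i\mathbf{a}$ and $\varepsilon_i\mathbf{a}^{-1}$, each contributing $\sum_{j=1}^m a_j$ to the absolute value sum, together with the $2n$ junction entries $2c_i$, contributing $2|c_i|$; hence $\sum_j|b_j| = (2n+1)\sum_{j=1}^m a_j + \sum_{i=1}^{2n} 2|c_i|$. Within a single block all entries share the sign $\varepsilon_i$, so no sign change occurs there; the only possible sign changes sit on the two sides of each junction entry $2c_i$, between $\varepsilon_i A$ and $2c_i$ and between $2c_i$ and $\varepsilon_{i+1} A$, where $A = a_m$ or $a_1$ is always $\geq 2$. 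These are detected precisely by $\psi(i)$ and $\bar\psi(i)$, and when $c_i = 0$ the middle entry is absorbed while the hypothesis $\varepsilon_i\varepsilon_{i+1} \neq -1$ forces the two adjacent block ends to share a sign, consistently with $\psi(i) = \bar\psi(i) = 0$. Thus the number of sign changes is $\sum_{i=1}^{2n}(\psi(i) + \bar\psi(i))$, and
\[
 c(K(\tilde r)) = N(\tilde r) = (2n+1)\sum_{j=1}^m a_j + \sum_{i=1}^{2n}\bigl(2|c_i| - \psi(i) - \bar\psi(i)\bigr),
\]
which is the stated formula. Finally $\bar c_i \geq 0$ because $\psi(i) + \bar\psi(i) \leq 2 \leq 2|c_i|$ when $c_i \neq 0$, while $\psi(i) = \bar\psi(i) = 0$ when $c_i = 0$.

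The step I expect to be the main obstacle is the invariance lemma for $N$: one must confirm that the reduction terminates in a genuinely \emph{standard} expansion rather than merely an all positive one, controlling the $1$'s introduced by the sign reduction move so that the first and last entries stay $\geq 2$ (or can be normalized without lowering the entry sum). In our structured setting this is eased by the fact that every block boundary entry $\varepsilon_i A$ has $|A| \geq 2$ and every nonzero junction entry has $|2c_i| \geq 2$, so the move never produces a zero except where absorption applies immediately; the genuinely delicate point is the behaviour at the two extreme ends of the whole expansion, which I would handle via the reversal and mirror relations so as to place a positive block end of value $\geq 2$ at each extremity.
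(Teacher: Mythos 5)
This proposition is stated in the paper purely as a quotation from \cite{SZK}; no proof of it appears anywhere in the present paper, so there is no in-paper argument to compare your proposal against, and it can only be judged on its own merits. As a strategy it is sound and is the natural route: for a standard expansion the crossing number is the entry sum (Section \ref{sect:cfe}); your reduction identity $[\ldots, A, -x, B, \ldots] = [\ldots, A-1, 1, x-1, \overline{B, \ldots}]$ is a correct continued-fraction identity (a direct computation with the tail treated as a single variable confirms it, and I checked it on examples such as $[3,-2,3,-2,3] = 19/45$, which reduces to the standard $[2,2,1,2,2]$ with entry sum $9$, matching the formula); each application lowers the absolute-value sum and the sign-change count by exactly one, so $N$ is preserved and the reduction terminates; and the direct evaluation of $N$ on the type $2n+1$ sequence does give $(2n+1)\sum_{j} a_j + \sum_{i} \bar{c}_i$, with the $c_i = 0$ junctions handled correctly by the standing assumption that $c_i \neq 0$ or $\varepsilon_i \varepsilon_{i+1} \neq -1$.

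The step you flag as the main obstacle is indeed where your write-up is thinnest, and closing it needs one more idea than you state. The dangerous degenerate case is $x = 1$: the move then creates $[\ldots, A-1, 1, 0, \overline{B}]$, and absorbing that zero merges entries of opposite signs, which destroys the invariance of $N$. Your stated reason --- that block boundaries and nonzero junction entries have absolute value at least $2$ --- refers only to the \emph{original} sequence; after several moves the sequence contains created entries equal to $1$, so you must rule out that such an entry ever sits on the negative side of a sign change. The invariant that does this: each move leaves a strictly positive prefix and globally negates the remaining tail, which is (up to sign) a suffix of the original sequence; hence every sign change of every intermediate sequence occurs at a position that was a sign change of the original sequence, and the entry playing the role of $-x$ is always, up to sign, an original entry adjacent to a junction, so $x \geq 2$ at every step. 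The other degeneracy, $A = 1$, is then harmless, since the created zero is absorbed into a same-sign positive neighbor, which does preserve $N$; and a terminal entry equal to $1$ in the final all-positive expansion is removed by $[\ldots, a, 1] = [\ldots, a+1]$, which preserves the entry sum, with the front handled by reversal as you indicate (when $m \geq 2$ the first entry $a_1 \geq 2$ is in fact never touched by any move). With these points made explicit, your argument is complete.
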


Remark that 
\[
 \sum_{i=1}^{2n} (\psi (i) + \bar{\psi} (i)) 
\]
indicates the number of sign changes in $\tilde{r}$. 
As a corollary, we get 
\[
c(K) \geq 3 c(K') , 
\]
if there exits an epimorphism $\varphi : G(K) \to G(K')$ for $2$-bridge knots $K,K'$. 

\begin{proposition}[\cite{suzukitran}]\label{thm:suzukitran}
Let $a_1,a_2, \ldots,a_{2m}$ be non-zero even numbers. 
Suppose that a rational number $\tilde{r}$ has a continued fraction expansion 
of type $2n+1$ with respect to ${\bf a} = (a_1,a_2,\ldots,a_{2 m})$: 
\[
 \tilde{r} = 
[\varepsilon_1 {\bf a}, 2 c_1, 
\varepsilon_2 {\bf a}^{-1}, 2 c_2, 
\varepsilon_3 {\bf a}, 2 c_3, 
\varepsilon_4 {\bf a}^{-1}, 2 c_4, 
\ldots, 
\varepsilon_{2n} {\bf a}^{-1}, 2 c_{2n}, \varepsilon_{2n+1} {\bf a}] . 
\]
Then the genus of $K(\tilde{r})$ is given by 
\[
g(K(\tilde{r})) = (2n+1) m + n - | \{ i \, |\,  c_i = 0 \} |  .
\]
In particular, the maximum genus and the minimum genus of $K(\tilde{r})$ are $(2n+1) m + n$ and $(2n+1) m - n$ 
respectively.  
\end{proposition}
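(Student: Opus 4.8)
The plan is to produce an \emph{even} continued fraction expansion of $\tilde{r}$ and then read off the genus as half of its length, using the facts recalled in Section~\ref{sect:cfe} that $g(K(r)) = m$ when $r = [a_1,\ldots,a_{2m}]$ is an even continued fraction, that such an expansion has even length, and that it is unique up to symmetry.

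First I would observe that the given type $2n+1$ expansion already consists entirely of even entries: each $a_j$ is even by hypothesis, so $\varepsilon_i a_j$ is even, and each inserted term $2c_i$ is even. Counting components, there are $2n+1$ blocks of the form $\varepsilon_i {\bf a}^{\pm 1}$, each contributing $2m$ entries, together with the $2n$ separating terms $2c_1,\ldots,2c_{2n}$, for a total of $(2n+1)(2m) + 2n$ entries. The only obstruction to this being an even continued fraction expansion is the possible presence of zero entries, which occur exactly when some $c_i = 0$.

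Next I would eliminate these zeros one at a time using the elementary identity $[\ldots,x,0,y,\ldots] = [\ldots,x+y,\ldots]$, which one verifies from the product of the associated matrices $\left(\begin{smallmatrix} a & 1 \\ 1 & 0\end{smallmatrix}\right)$ since $\left(\begin{smallmatrix} a & 1 \\ 1 & 0\end{smallmatrix}\right)\left(\begin{smallmatrix} 0 & 1 \\ 1 & 0\end{smallmatrix}\right)\left(\begin{smallmatrix} b & 1 \\ 1 & 0\end{smallmatrix}\right) = \left(\begin{smallmatrix} a+b & 1 \\ 1 & 0\end{smallmatrix}\right)$. When $c_i = 0$ the zero sits between the last entry of block $i$ and the first entry of block $i+1$; a short case check on the parity of $i$ shows that in both cases the merged entry equals $(\varepsilon_i + \varepsilon_{i+1})a_j$ with $j \in \{1,2m\}$. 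Here the normalization recalled after Theorem~\ref{thm:ors}, namely $c_i \neq 0$ or $\varepsilon_i\cdot\varepsilon_{i+1} \neq -1$, guarantees that whenever $c_i = 0$ we have $\varepsilon_i = \varepsilon_{i+1}$, so the merged entry is $\pm 2a_j$, which is even and nonzero. Moreover, since each block has $2m \geq 2$ entries, the first and last entries of any block are distinct, so the collapses coming from distinct vanishing $c_i$ never interfere and cannot cascade; each such collapse removes exactly two entries (the zero and one merged coordinate).

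Therefore, after all collapses the expansion is a genuine even continued fraction, and by uniqueness its half-length is the genus. The number of entries removed is $2\,|\{i \mid c_i = 0\}|$, whence
\[
g(K(\tilde{r})) = \tfrac{1}{2}\Big( (2n+1)(2m) + 2n - 2\,|\{i \mid c_i = 0\}| \Big) = (2n+1)m + n - |\{i \mid c_i = 0\}| .
\]
The extremal cases follow at once: the maximum $(2n+1)m + n$ is attained when no $c_i$ vanishes, and the minimum $(2n+1)m - n$ when all $2n$ of the $c_i$ vanish. The main point to pin down carefully is exactly the nonvanishing of each merged entry, which is precisely where the normalization condition is used, together with confirming that the collapses stay independent so that the entry count drops by exactly two per vanishing $c_i$.
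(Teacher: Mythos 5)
Your proof is correct: the paper itself states this proposition without proof (it is recalled from \cite{suzukitran}), and your argument --- collapsing each zero entry $2c_i=0$ via $[\ldots,x,0,y,\ldots]=[\ldots,x+y,\ldots]$, using the normalization $c_i\neq 0$ or $\varepsilon_i\cdot\varepsilon_{i+1}\neq -1$ to keep the merged entry $\pm 2a_j$ nonzero, and then reading the genus off the half-length of the resulting even continued fraction --- is essentially the same reduction used in that reference. You also correctly isolate the two points that need care, namely nonvanishing of merged entries and non-interference of collapses (guaranteed since each block has $2m\geq 2$ entries), so nothing is missing.
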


As a corollary, we get 
\[
g(K) \geq 3 g(K') - 1 , 
\]
if there exits an epimorphism $\varphi : G(K) \to G(K')$ for $2$-bridge knots $K,K'$.

\section{Generating function with respect to crossing number and genus}

\begin{theorem}\label{thm:generatingfunction2}
For a $2$-bridge knot $K(r)$ of genus $g_r$ with $c_r$ crossings, 
we take the even continued fraction expansion  $[a_1,a_2,\ldots,a_{2 g_r}]$ of $r$ 
and define the generating function in two variables $s,t$ as follows: 
\[
 f(r) = \sum_{n=1}^\infty \sum_{l=-n}^n \sum_{k=0}^\infty \, \bar{f} (n,l,k) \, 
s^{(2n+1) g_r + l}
\, 
t^{(2n+1) c_r + k},
\]
\begin{itemize}
\item[(A)] if $(a_1,a_2,\ldots,a_{2 g_r})$ is not symmetric, then 
\[
\bar{f} (n,l,k) = 
\left\{
\begin{array}{l}
\Big(
\begin{array}{c}
2n \\ n+l
\end{array}
\Big)
\hfill k = 0 \\
0 \hfill k \neq 0, n+ l = 0 \\
\Big(
\begin{array}{c}
2n \\ n+l
\end{array}
\Big)
\left(
\displaystyle{\sum_{p=1}^{\min (k,n+l)}}
2^p \, 
\Big(
\begin{array}{c}
n + l \\ p
\end{array}
\Big)
\Big(
\begin{array}{c}
k - 1 \\ p - 1
\end{array}
\Big)
\right) \\
\hfill k \neq 0, n+ l \neq 0 
\end{array}
\right. , 
\]
\item[(B)] if $(a_1,a_2,\ldots,a_{2 g_r})$ is symmetric, then 
\[
\bar{f} (n,l,k) = 
\left\{
\begin{array}{l}
\frac{1}{2}
\Big(
\begin{array}{c}
2n \\ n+l
\end{array}
\Big)
\hfill k = 0, n + l \mbox{:odd} \\
\frac{1}{2} 
\Big(
\begin{array}{c}
2n \\ n+l
\end{array}
\Big) 
+ \frac{1}{2} 
\Big(
\begin{array}{c}
n \\ \frac{n+l}{2}
\end{array}
\Big)
\hfill k = 0, n + l \mbox{:even}  \\
0 \hfill k \neq 0, n + l = 0  \\
\Big(
\begin{array}{c}
2n \\ n+l
\end{array}
\Big)
\left(
\displaystyle{\sum_{p=1}^{\min (k,n+l)}}
2^{p-1} \, 
\Big(
\begin{array}{c}
n + l \\ p
\end{array}
\Big)
\Big(
\begin{array}{c}
k - 1 \\ p - 1
\end{array}
\Big)
\right) \\
\hfill k \neq 0,  ( n + l \mbox{:odd or } k \mbox{:odd}) \\
\Big(
\begin{array}{c}
2n \\ n+l
\end{array}
\Big)
\left(
\displaystyle{\sum_{p=1}^{\min (k,n+l)}}
2^{p-1} \, 
\Big(
\begin{array}{c}
n + l \\ p
\end{array}
\Big)
\Big(
\begin{array}{c}
k - 1 \\ p - 1
\end{array}
\Big)
\right) \\
+ \Big(
\begin{array}{c}
n \\ \frac{n+l}{2}
\end{array}
\Big)
\left(
\displaystyle{\sum_{p=1}^{\min (\frac{k}{2},\frac{n+l}{2})}}
2^{p-1} \, 
\Big(
\begin{array}{c}
\frac{n + l}{2} \\ p
\end{array}
\Big)
\Big(
\begin{array}{c}
\frac{k}{2} - 1 \\ p - 1
\end{array}
\Big)
\right) \qquad \qquad \\
\hfill k \neq 0,  n + l \neq 0, n + l \mbox{:even}, k \mbox{:even}
\end{array}
\right.  .
\]
\end{itemize}
Then the number of $2$-bridge knots $K$ 
of genus $g$ with $c$ crossings 
which admit epimorphisms $\varphi : G(K) \to G(K(r))$ 
is the coefficient of $s^{g} \, t^{c}$ in $f(r)$. 
\end{theorem}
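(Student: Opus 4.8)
The plan is to treat Theorem \ref{thm:generatingfunction2} as a bigraded refinement of Theorem \ref{thm:generatingfunction}: the underlying set of knots $K(\tilde r)$ and its correspondence with continued fraction expansions of type $2n+1$ is exactly the one set up in \cite{SZK}, so I only need to record, for each counted knot, its genus. Concretely, I would fix $n$ and enumerate the admissible data $(\varepsilon_2,\dots,\varepsilon_{2n+1};c_1,\dots,c_{2n})$ (with $\varepsilon_1=1$ and the normalization that we never have $c_i=0$ together with $\varepsilon_i\varepsilon_{i+1}=-1$, recalled after Theorem \ref{thm:ors}). By Proposition \ref{thm:suzukitran} the genus of $K(\tilde r)$ equals $(2n+1)g_r+n-z$ with $z=|\{i\mid c_i=0\}|$, so the exponent of $s$ is $l=n-z$ and indeed ranges over $-n\le l\le n$; by Proposition \ref{thm:crossingnumber} the exponent of $t$ is $k=\sum_{i=1}^{2n}\bar c_i$. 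Thus it suffices to compute the two-variable generating polynomial recording $s^{\,n-z}t^{\sum\bar c_i}$ over all admissible data and, in the symmetric case, to pass to orbits.

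For the count over all admissible data I would use a transfer matrix in the sign variable. Writing each contribution as a product over the $2n$ ``edges'' $i$, an edge with $c_i\ne0$ contributes $\frac{1+t^2}{1-t^2}$ when $\varepsilon_i=\varepsilon_{i+1}$ and $\frac{2t}{1-t^2}$ when $\varepsilon_i\ne\varepsilon_{i+1}$ (summing $t^{\bar c_i}=t^{2|c_i|-\psi(i)-\bar\psi(i)}$ over the two signs and all magnitudes of $c_i$), while an edge with $c_i=0$ contributes $s^{-1}$ and is allowed only when $\varepsilon_i=\varepsilon_{i+1}$. This gives the symmetric transfer matrix $M=\begin{pmatrix}A&B\\B&A\end{pmatrix}$ with $A=\frac{1+t^2}{1-t^2}+s^{-1}$ and $B=\frac{2t}{1-t^2}$; since $(1,1)^{\mathsf T}$ is an eigenvector with eigenvalue $A+B=\frac{1+t}{1-t}+s^{-1}$ and $\varepsilon_1=1$ is fixed, the raw generating function for fixed $n$ is $F_n=s^{n}(A+B)^{2n}=s^{-n}\bigl(1+\frac{s(1+t)}{1-t}\bigr)^{2n}$. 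Extracting $[s^l]$ selects $j=n+l$ in the binomial expansion, giving $\binom{2n}{n+l}\bigl(\frac{1+t}{1-t}\bigr)^{n+l}$, and the identity $[t^k]\bigl(\frac{1+t}{1-t}\bigr)^{p}=\sum_{q=1}^{\min(k,p)}2^{q}\binom{p}{q}\binom{k-1}{q-1}$ (proved by writing $\frac{1+t}{1-t}=1+\frac{2t}{1-t}$) yields precisely formula (A), with the boundary cases $k=0$ and $n+l=0$ coming out automatically.

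In the symmetric case two admissible data give the same knot exactly when they are related by the reversal involution $\iota$ coming from Schubert's theorem, so by orbit counting $\bar f(n,l,k)=\tfrac12\bigl(\mathrm{raw}+\mathrm{fixed}\bigr)$. The crux is to describe the $\iota$-fixed data: I would show that $\iota$ pairs edge $i$ with edge $2n+1-i$, forcing $c_{2n+1-i}=\pm c_i$ and $\bar c_{2n+1-i}=\bar c_i$, and pairs sign $i$ with $2n+2-i$, leaving $\varepsilon_{n+1}$ and $\varepsilon_2,\dots,\varepsilon_n$ free. Consequently on fixed data the zeros occur in pairs, so $z=2z'$ is even, $n+l$ is even, and $k=2\sum_{i\le n}\bar c_i$ is even. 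Running the same transfer matrix on the $n$ first-half edges (now with $s^{-2}$ per zero pair and $t^{2\bar c_i}$ per nonzero edge) gives the fixed-point generating function $s^{-n}\bigl(1+\frac{s^2(1+t^2)}{1-t^2}\bigr)^{n}$, whose $[s^lt^k]$ coefficient is $\binom{n}{(n+l)/2}\sum_{q}2^{q}\binom{(n+l)/2}{q}\binom{k/2-1}{q-1}$ when $n+l$ and $k$ are even and $0$ otherwise; averaging $\tfrac12(\mathrm{raw})+\tfrac12(\mathrm{fixed})$ then produces formula (B), including the $k=0$ rows.

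The hardest part is this fixed-point analysis: making $\iota$ explicit at the level of the pairs $(\varepsilon_i,c_i)$ and rigorously tracking the global sign $\varepsilon=(-1)^{L-1}$ in Schubert's reversal together with the way vanishing $c_i$ shorten the reduced length $L$, so as to prove $\bar c_{2n+1-i}=\bar c_i$ and the ``zeros come in pairs'' statement. A useful final check is that setting $s=1$ collapses $F_n$ to $2^{2n}/(1-t)^{2n}$ and the fixed-point series to $2^{n}/(1-t^2)^{n}$, recovering exactly Theorem \ref{thm:generatingfunction}; this both validates the bookkeeping and confirms that the new $s$-grading is a genuine refinement.
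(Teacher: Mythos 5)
Your proposal is correct and reproduces exactly the stated formulas, but it executes the counting step differently from the paper, so a comparison is worthwhile. Both proofs share the same skeleton: reduce, via Proposition \ref{thm:crossingnumber} and Proposition \ref{thm:suzukitran}, to counting normalized data $(\varepsilon_2,\dots,\varepsilon_{2n+1};c_1,\dots,c_{2n})$ with $n-l$ vanishing $c_i$'s and $\sum_i\bar c_i=k$, and in the symmetric case pass to orbits of the reversal involution; the paper writes the orbit count as $\frac{\mathrm{raw}-\mathrm{fixed}}{2}+\mathrm{fixed}$, which is your $\frac12(\mathrm{raw}+\mathrm{fixed})$. The difference is in how the raw and fixed counts are obtained. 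The paper enumerates directly: choose the zero positions ($\binom{2n}{n+l}$ ways), then distribute $k$ over the nonzero slots, quoting from \cite{SZK} the multiplicity fact that each nonzero $\bar c_j$ corresponds to exactly $2$ choices of $c_j$ and each zero $\bar c_j$ to exactly $1$. Your transfer matrix derives that multiplicity instead of citing it, and compresses the whole count for fixed $n$ into the closed forms $s^{-n}\bigl(1+\tfrac{s(1+t)}{1-t}\bigr)^{2n}$ and, for the fixed locus, $s^{-n}\bigl(1+\tfrac{s^{2}(1+t^{2})}{1-t^{2}}\bigr)^{n}$; cases (A) and (B), with all their boundary subcases, then fall out by coefficient extraction, and the specialization $s=1$ recovering Theorem \ref{thm:generatingfunction} is visible at a glance rather than checked a posteriori. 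Two places where you should add the paper's level of care, though neither is a genuine gap: (i) Proposition \ref{thm:crossingnumber} is stated for the standard (all-positive) expansion, whereas ${\bf a}$ here is the even expansion whose last entry may be negative; the paper redefines $\psi,\bar\psi$ with the extra factor $a_{2g_r}$ and notes the crossing-number formula persists, and your edge weights implicitly use this extension (harmlessly, since the weights depend only on whether $\varepsilon_i=\varepsilon_{i+1}$). (ii) In the fixed-point analysis, since the full expansion has even length, Schubert reversal composed with mirroring is plain reversal, and the normalized involution (reverse, then negate all entries if the leading sign becomes $-1$) has no fixed points on its ``anti-palindromic'' branch because it would force $\varepsilon_{n+1}=-\varepsilon_{n+1}$; hence fixed data are exactly the palindromic ones, $c_{2n+1-i}=c_i$ and $\varepsilon_{2n+2-i}=\varepsilon_i$, which yields your claims $\bar c_{2n+1-i}=\bar c_i$ and that zeros occur in pairs. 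With these two remarks supplied, your argument is a complete proof, resting on the same inputs from \cite{SZK} as the paper's own.
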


\begin{proof}
We will count the number of $2$-bridge knots $K = K(\tilde{r})$ of genus $(2n+1) g_r + l$ with $(2n+1) c_r + k$ crossings
which correspond to a continued fraction expansion  
\begin{align} \label{rtilde}
 \tilde{r} = 
[\varepsilon_1 {\bf a}, 2 c_1, 
\varepsilon_2 {\bf a}^{-1}, 2 c_2, 
\varepsilon_3 {\bf a}, 2 c_3, 
\varepsilon_4 {\bf a}^{-1}, 2 c_4, 
\ldots, 
\varepsilon_{2n} {\bf a}^{-1}, 2 c_{2n}, \varepsilon_{2n+1} {\bf a}], 
\end{align}
where ${\bf a} = (a_1,a_2,\ldots,a_{2 g_r})$. 
We extend the definition of $\psi (i),\bar{\psi} (i)$ as follows: 
\[
\psi (i) = 
\left\{
\begin{array}{ll}
1 & \, \, \varepsilon_i \cdot a_{2 g_r} \cdot c_i < 0 \\
0 & \, \, \varepsilon_i \cdot a_{2 g_r} \cdot c_i \geq 0 
\end{array}
\right. , \quad
\bar{\psi} (i) = 
\left\{
\begin{array}{ll}
1 & \, \, c_i \cdot \varepsilon_{i+1} \cdot a_{2 g_r} < 0 \\
0 & \, \, c_i \cdot \varepsilon_{i+1} \cdot a_{2 g_r} \geq 0 
\end{array}
\right. .
\]
Since the last element of any standard continued fraction expansion is positive, 
this definition does not conflict with the definition of $\psi (i),\bar{\psi} (i)$ in Proposition \ref{thm:crossingnumber}. 
By Proposition \ref{thm:crossingnumber} and a similar argument, we have 
\[
\bar{c}_i = 2 |c_i| - \psi (i) - \bar{\psi} (i) \geq 0, \qquad 
 k = \sum_{i=1}^{2n} \bar{c}_i = \sum_{i=1}^{2n} 2 |c_i| - \psi (i) - \bar{\psi} (i).
\]
All the entries of (\ref{rtilde}) are even and the length of this continued fraction is $2(2n+1) g_r + 2n$. 
However, the genus is $(2n+1) g_r + l$. 
Then $n-l$ elements of $c_1, c_2, \ldots, c_{2n}$ are zero. 
It follows that 
$n-l$ elements of $\bar{c}_1, \bar{c}_2, \ldots, \bar{c}_{2n}$ are also zero
and that the remaining $\bar{c}_{j_1}, \bar{c}_{j_2},\ldots, \bar{c}_{j_{n+l}}$ are non-zero,  
and that we have $\displaystyle{\Big(
\begin{array}{c}
2n \\ n+l
\end{array}
\Big)}$ cases. 
Furthermore, 
we determine the number of ways to assign non-negative integers, whose total is $k$,  
to $(\bar{c}_{j_1}, \bar{c}_{j_2},\ldots, \bar{c}_{j_{n+l}})$. 
Besides, when we assign a non-zero number to $\bar{c}_j$, 
there are $2$ possibilities for each $c_j$, 
however, there is only $1$ possibility for $c_j$ if $\bar{c}_j = 0$
(see \cite[Proof for Theorem 5.1]{SZK}). 
Then the number of possibilities is 
\[
\left\{
\begin{array}{ll}
\Big(
\begin{array}{c}
2n \\ n+l
\end{array}
\Big)
& k = 0 \\
\Big(
\begin{array}{c}
2n \\ n+l
\end{array}
\Big)
\left(
\displaystyle{\sum_{p=1}^{\min (k,n+l)}}
2^p \, 
\Big(
\begin{array}{c}
n + l \\ p
\end{array}
\Big)
\Big(
\begin{array}{c}
k - 1 \\ p - 1
\end{array}
\Big)
\right)
& k \neq 0 
\end{array}
\right. ,
\]
since we choose $p$ components from $(\bar{c}_{j_1}, \bar{c}_{j_2},\ldots, \bar{c}_{j_{n+l}})$ and 
distribute non-zero numbers, whose total is $k$, to the $p$ components. 
Hence we get the desired generating function for the case (A). 

If a continued fraction expansion of $r$ is symmetric, namely, case (B), we have 
\[
K([\varepsilon_1 {\bf a}, 2 c_1, \varepsilon_2 {\bf a}^{-1}, 
\ldots, 
2 c_{2n}, \varepsilon_{2n+1} {\bf a}])
=
K([\varepsilon_{2n+1} {\bf a}, 2 c_{2n}, 
\ldots, 
\varepsilon_{2} {\bf a}^{-1}, 2 c_1, \varepsilon_1 {\bf a}]) .
\]
It implies that we counted the same knot twice for the case that 
a continued fraction expansion of $\tilde{r}$ are is not symmetric, that is, $\tilde{r}$ is not in the form 
\[
 [\varepsilon_1 {\bf a}, 2 c_1, \ldots, 
2 c_n, \varepsilon_{n+1} {\bf a}^{\pm 1}, 2 c_n, \ldots, 
2 c_1, \varepsilon_1 {\bf a}] . 
\]
When $k=0$ and $n+l$ is odd, all the continued fraction expansions are not in this form 
and then the number is half of the case (A). 
When $k=0$ and $n+l$ is even, the number of knots is 
\[
\frac{
\Big(
\begin{array}{c}
2n \\ n+l
\end{array}
\Big) - 
\Big(
\begin{array}{c}
n \\ \frac{n+l}{2} 
\end{array}
\Big)
}{2} + 
\Big(
\begin{array}{c}
n \\ \frac{n+l}{2}
\end{array}
\Big) 
= 
\frac{1}{2}
\Big(
\begin{array}{c}
2n \\ n+l
\end{array}
\Big)
+
\frac{1}{2}
\Big(
\begin{array}{c}
n \\ \frac{n+l}{2}
\end{array}
\Big).
\]
Similarly, we obtain the remaining of the generating function of (B) and it completes the proof. 
\end{proof}

\begin{example}\label{ex:3162}
First, we apply Theorem \ref{thm:generatingfunction2} to the trefoil knot $3_1 = K(1/3) = K([2,-2])$. 
The generating function for the trefoil knot is 
\begin{align*}
 & f(1/3) = \\
& \, \,  s^2 t^9 + s^3 t^9 + s^4 t^9 + 2 s^3 t^{10} + 2 s^4 t^{10} + 2 s^3 t^{11} + 5 s^4 t^{11} \\ 
& + 2 s^3 t^{12} + 6 s^4 t^{12} + 2 s^3 t^{13} + 9 s^4 t^{13} + 2 s^3 t^{14} + 10 s^4 t^{14} \\
& + 3 s^3 t^{15} + 15 s^4 t^{15} + 4 s^5 t^{15} + 2 s^6 t^{15} + s^7 t^{15} \\
& + 2 s^3 t^{16} + 18 s^4 t^{16} + 12 s^5 t^{16} + 12 s^6 t^{16} + 4 s^7 t^{16} \\
& + 2 s^3 t^{17} + 21 s^4 t^{17} + 26 s^5 t^{17} + 36 s^6 t^{17} + 18 s^7 t^{17} \\ 
& + 2 s^3 t^{18} + 22 s^4 t^{18} + 36 s^5 t^{18} + 76 s^6 t^{18} + 44 s^7 t^{18} \\ 
& + 2 s^3 t^{19} + 25 s^4 t^{19} + 50 s^5 t^{19} + 132 s^6 t^{19} + 100 s^7 t^{19} \\
& + 2 s^3 t^{20} + 26 s^4 t^{20} + 60 s^5 t^{20} + 204 s^6 t^{20} + 180 s^7 t^{20} \\
& + 2 s^3 t^{21} + 30 s^4 t^{21} + 77 s^5 t^{21} + 301 s^6 t^{21} + 320 s^7 t^{21} + 9 s^8 t^{21} + 3 s^9 t^{21} + s^{10} t^{21} \\
& + 2 s^3 t^{22} + 30 s^4 t^{22} + 90 s^5 t^{22} + 426 s^6 t^{22} + 536 s^7 t^{22} + 60 s^8 t^{22} + 30 s^9 t^{22} + 6 s^{10} t^{22} \\
& + \cdots 
\end{align*}
and provides Table \ref{table31}, which shows the coefficients of $s^g \, t^c$ in $f(1/3)$. 

\begin{table}[h]
\begin{tabular}{c||c|c|c|c|c|c|c|c|c|c|c|c|c|c|c}
\diagbox{$g$}{$c$} & 8 & 9 & 10 & 11 & 12 & 13 & 14 & 15 & 16 & 17 & 18 & 19 & 20 & 21 & 22\\
\hline
1 & 0 & 0 & 0 & 0 & 0 & 0 & 0 & 0 & 0 & 0 & 0 & 0 & 0 & 0 & 0 \\
\hline
2 & 0 & 1 & 0 & 0 & 0 & 0 & 0 & 0 & 0 & 0 & 0 & 0 & 0 & 0 & 0 \\
\hline
3 & 0 & 1 & 2 & 2 & 2 & 2 & 2 & 3 & 2 & 2 & 2 & 2 & 2 & 2 & 2 \\
\hline
4 & 0 & 1 & 2 & 5 & 6 & 9 & 10 & 15 & 18 & 21 & 22 & 25 & 26 & 30 & 30 \\
\hline
5 & 0 & 0 & 0 & 0 & 0 & 0 & 0 & 4 & 12 & 26 & 36 & 50 & 60 & 77 & 90 \\
\hline
6 & 0 & 0 & 0 & 0 & 0 & 0 & 0 & 2 & 12 & 36 & 76 & 132 & 204 & 301 & 426 \\
\hline
7 & 0 & 0 & 0 & 0 & 0 & 0 & 0 & 1 & 4 & 18 & 44 & 100 & 180 & 320 & 536 \\
\hline
8 & 0 & 0 & 0 & 0 & 0 & 0 & 0 & 0 & 0 & 0 & 0 & 0 & 0 & 9 & 60 \\
\hline
9 & 0 & 0 & 0 & 0 & 0 & 0 & 0 & 0 & 0 & 0 & 0 & 0 & 0 & 3 & 30 \\
\hline
10 & 0 & 0 & 0 & 0 & 0 & 0 & 0 & 0 & 0 & 0 & 0 & 0 & 0 & 1 & 6 \\
\hline
11 & 0 & 0 & 0 & 0 & 0 & 0 & 0 & 0 & 0 & 0 & 0 & 0 & 0 & 0 & 0 
\end{tabular}
\caption{onto $3_1$}
\label{table31}
\end{table}

The previous paper \cite{kitano-suzuki1} shows that 
the knot groups of $2$-bridge knot with $9$ crossings 
admitting epimorphisms onto the trefoil knot group are  those of $9_1,9_6,9_{23}$, 
whose genera are $4,3,2$ respectively. 
Similarly, the knot groups of $2$ (respectively $2$) distinct $2$-bridge knots with $10$ crossings 
of genus $3$ (respectively $4$) admitting epimorphisms onto the trefoil knot group 
are those of $10_{32},10_{40}$ (respectively $10_5,10_9$).  

Next, we consider $6_2 = K(7/11) =  K([2,2,-2,2])$ as another example. 
Since $(2,2,-2,2)$ is not symmetric, the generating function for $K(7/11)$ is  
\begin{align*}
& f(7/11) = \\
& \, s^5 t^{18} + 2 s^6 t^{18} + s^7 t^{18} + 4 s^6 t^{19} + 4 s^7 t^{19} +  4 s^6 t^{20} + 8 s^7 t^{20} + 4 s^6 t^{21} + 12 s^7 t^{21} \\
& \,  + 4 s^6 t^{22} + 16 s^7 t^{22} + 4 s^6 t^{23} + 20 s^7 t^{23} + 4 s^6 t^{24} + 24 s^7 t^{24} + 4 s^6 t^{25} + 28 s^7 t^{25} \\
& \,  + 4 s^6 t^{26} + 32 s^7 t^{26} + 4 s^6 t^{27} + 36 s^7 t^{27} + 4 s^6 t^{28} + 40 s^7 t^{28} + 4 s^6 t^{29} + 44 s^7 t^{29} \\
& \, + 4 s^6 t^{30} + 48 s^7 t^{30} + s^8 t^{30} + 4 s^9 t^{30} + 6 s^{10} t^{30} + 4 s^{11} t^{30} + s^{12} t^{30} \\
& \, +4 s^6 t^{31} + 52 s^7 t^{31} + 8 s^9 t^{31} + 24 s^10 t^{31} + 24 s^{11} t^{31} + 8 s^{12} t^{31} + \cdots .
\end{align*}

Table \ref{table62} shows the coefficients of $s^g t^c$ in $f(7/11)$. 

\begin{table}[h]
\begin{tabular}{c||c|c|c|c|c|c|c|c|c|c|c|c|c|c|c}
\diagbox{$g$}{$c$} & 17 & 18 & 19 & 20 & 21 & 22 & 23 & 24 & 25 & 26 & 27 & 28 & 29 & 30 & 31 \\
\hline
4 & 0 & 0 & 0 & 0 & 0 & 0 & 0 & 0 & 0 & 0 & 0 & 0 & 0 & 0 & 0 \\
\hline
5 & 0 & 1 & 0 & 0 & 0 & 0 & 0 & 0 & 0 & 0 & 0 & 0 & 0 & 0 & 0 \\
\hline
6 & 0 & 2 & 4 & 4 & 4 & 4 & 4 & 4 & 4 & 4 & 4 & 4 & 4 & 4 & 4 \\
\hline
7 & 0 & 1 & 4 & 8 & 12 & 16 & 20 & 24 & 28 & 32 & 36 & 40 & 44 & 48 & 52 \\
\hline
8 & 0 & 0 & 0 & 0 & 0 & 0 & 0 & 0 & 0 & 0 & 0 & 0 & 0 & 1 & 0 \\
\hline
9 & 0 & 0 & 0 & 0 & 0 & 0 & 0 & 0 & 0 & 0 & 0 & 0 & 0 & 4 & 8 \\
\hline
10 & 0 & 0 & 0 & 0 & 0 & 0 & 0 & 0 & 0 & 0 & 0 & 0 & 0 & 6 & 24 \\
\hline
11 & 0 & 0 & 0 & 0 & 0 & 0 & 0 & 0 & 0 & 0 & 0 & 0 & 0 & 4 & 24 \\
\hline
12 & 0 & 0 & 0 & 0 & 0 & 0 & 0 & 0 & 0 & 0 & 0 & 0 & 0 & 1 & 8 \\
\hline
13 & 0 & 0 & 0 & 0 & 0 & 0 & 0 & 0 & 0 & 0 & 0 & 0 & 0 & 0 & 0 
\end{tabular}
\caption{onto $6_2$}
\label{table62}
\end{table}

\end{example}

\begin{remark}
The previous generating function $f_c(r)$ of Theorem \ref{thm:generatingfunction} is  
obtained by substituting $1$ for $s$ in $f(r)$ of Theorem \ref{thm:generatingfunction2}. 
For example, we have $f_c(1/3)$ and $f_c(7/11)$ as follows: 
\begin{align*}
f_c(1/3) =& \, 3 t^9 + 4 t^{10} + 7 t^{11} + 8 t^{12} + 11 t^{13} + 12 t^{14} + 25 t^{15} 
+ 48 t^{16} + 103 t^{17} + 180 t^{18} \\ 
& + 309 t^{19} + 472 t^{20} + 743 t^{21} + 1180 t^{22} + 
\cdots ,
\end{align*}
\begin{align*}
f_c(7/11) =& \, 
4 t^{18} + 8 t^{19} + 12 t^{20} + 16 t^{21} + 20 t^{22} + 24 t^{23} + 28 t^{24} + 
 32 t^{25} + 36 t^{26} + 40 t^{27} \\
& + 44 t^{28} + 48 t^{29} + 68 t^{30} + 120 t^{31} +  \cdots . 
\end{align*}
Each coefficient is consistent with the total of each column in Table \ref{table31} and Table \ref{table62}. 
\end{remark}

We denote by $h(r,g,c)$ the coefficient of $s^g \, t^c$ in $f(r)$. 
In other words, the generating function $f(r)$ can be written as a formal power series in two variables $s,t$ :
\[
f(r) = \sum_{g=0}^\infty \sum_{c=0}^\infty \, h (r,g,c) \, s^{g} \, t^{c}  .
\]
For example, Table \ref{table62} shows that 
\begin{align*}
& h(7/11,5,18) = 1, \quad h(7/11,6,18) = 2, \quad h(7/11,7,18) = 1, \\
& h(7/11,g,18) = 0 \quad \mbox{ if } g \leq 4 \mbox{ or } 8 \leq g . 
\end{align*}
We can determine $h(r,g,c)$ more explicitly in specific cases as follows. 

\begin{corollary}\label{cor:specificcase}
Let $g_r$ and $c_r$ be the genus and the crossing number 
of a $2$-bridge knot $K(r)$ respectively. 
We denote by $[a_1,a_2,\ldots,a_{2g_r}]$ the even continued fraction expansion of $r$. 
\begin{enumerate}
\item If $g < 3 g_r - 1$ or $c < 3 c_r$ or 
there exists a natural number $n$ such that $(2n+1) g_r + n < g$ and $c < (2n+3) c_r$, 
then $h(r,g,c) = 0$.
\item Suppose that $n \leq g_r$.  
\begin{itemize}
\item[(A)] If $(a_1,a_2, \ldots, a_{2g_r})$ is not symmetric, then  
\begin{align*}
& h(r,(2n+1) g_r - n,c) = 
 \left\{
\begin{array}{ll}
1
& c = (2n+1) c_r) \\
0 & \mbox{otherwise} 
\end{array}
\right. , \\
& h(r,(2n+1) g_r - n + 1,c) = 
 \left\{
\begin{array}{ll}
0 & c < (2n+1) c_r  \\
2n & c = (2n+1) c_r \\
4n & c > (2n+1) c_r   
\end{array}
\right. .
\end{align*}
\item[(B)] If $(a_1,a_2, \ldots, a_{2g_r})$ is symmetric, then  
\begin{align*}
& h(r,(2n+1) g_r - n,c) = 
 \left\{
\begin{array}{ll}
1
& c = (2n+1) c_r \\
0 & \mbox{otherwise} 
\end{array}
\right. , \\
& h(r,(2n+1) g_r - n + 1,c) = 
 \left\{
\begin{array}{ll}
0 & c < (2n+1) c_r \\
n & c = (2n+1) c_r \\
2 n & c > (2n+1) c_r 
\end{array}
\right. .
\end{align*}
\end{itemize}
\item Suppose that $\max \{(2n-1) g_r + n, (2n+1) g_r - n \} \leq g \leq (2n+1) g_r + n$.  
\begin{itemize}
\item[(A)] If $(a_1,a_2, \ldots, a_{2g_r})$ is not symmetric, then  
\begin{align*}
& h(r,g,(2n+1) c_r)  = 
\Big(
\begin{array}{c}
2n \\ n+g - (2n+1) g_r
\end{array}
\Big) , \\
& h(r,g,(2n+1) c_r+1) = 
2 (n+g - (2n+1) g_r) \cdot \Big(
\begin{array}{c}
2n \\ n+g - (2n+1) g_r
\end{array}
\Big) .
\end{align*}
\item[(B)] If $(a_1,a_2, \ldots, a_{2g_r})$ is symmetric, then  
\begin{align*}
& h(r,g,(2n+1) c_r) = \left\{
\begin{array}{l}
\frac{1}{2} \Big(
\begin{array}{c}
2n \\ n+g - (2n+1) g_r
\end{array}
\Big) \\
\qquad \qquad \qquad  \qquad \qquad n+g - (2n+1) g_r: \mbox{odd} \\
\frac{1}{2} \Big(
\begin{array}{c}
2n \\ n+g - (2n+1) g_r
\end{array}
\Big) 
+ \frac{1}{2} \Big(
\begin{array}{c}
n \\ \frac{n+g - (2n+1) g_r}{2}
\end{array}
\Big) \\
\qquad \qquad \qquad \qquad \qquad  n+g - (2n+1) g_r: \mbox{even} 
\end{array}
\right. , \\
& h(r,g,(2n+1) c_r+1) = 
(n+g - (2n+1) g_r) \cdot \Big(
\begin{array}{c}
2n \\ n+g - (2n+1) g_r
\end{array}
\Big) .
\end{align*}
\end{itemize}
\end{enumerate}
\end{corollary}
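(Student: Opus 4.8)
The plan is to read $h(r,g,c)$ straight off the generating function of Theorem~\ref{thm:generatingfunction2}. Writing $g=(2n+1)g_r+l$ and $c=(2n+1)c_r+k$, each index $n\ge 1$ contributes only in the window $l\in[-n,n]$, $k\ge 0$, so that
\[
h(r,g,c)=\sum_{n\ge 1}\bar{f}\bigl(n,\,g-(2n+1)g_r,\,c-(2n+1)c_r\bigr),
\]
where a summand is read as $0$ whenever its second argument leaves $[-n,n]$ or its third is negative. Everything then reduces to deciding which indices $n$ survive for a given $(g,c)$ and evaluating the corresponding $\bar{f}$.

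First I would dispose of part (1). The smallest genus in the whole series occurs at $n=1,\ l=-1$, giving $g=3g_r-1$, and the smallest crossing number at $n=1,\ k=0$, giving $c=3c_r$; hence $g<3g_r-1$ or $c<3c_r$ forces $h(r,g,c)=0$. For the third condition, a genus $g$ with $(2n+1)g_r+n<g$ can only be realized by some index $n'\ge n+1$, whose crossing numbers start at $(2n'+1)c_r\ge(2n+3)c_r$; thus $c<(2n+3)c_r$ leaves no surviving term.

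The core of parts (2) and (3) is an \emph{isolation} step asserting that, under the stated hypotheses, exactly one index contributes. For part (3) this is clean. The lower bound $g\ge\max\{(2n-1)g_r+n,(2n+1)g_r-n\}$ forces every $n'\le n-1$ to have maximal genus $(2n'+1)g_r+n'\le(2n-1)g_r+(n-1)<g$, so no smaller index reaches $g$; the upper bound $g\le(2n+1)g_r+n$ keeps $l=g-(2n+1)g_r\in[-n,n]$; and since we evaluate only at $c=(2n+1)c_r$ and $c=(2n+1)c_r+1$, every $n'\ge n+1$ is excluded because its minimal crossing number $(2n'+1)c_r\ge(2n+3)c_r=(2n+1)c_r+2c_r$ already exceeds $c$ (using $c_r\ge 3$). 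With only $n$ surviving I would substitute $l=g-(2n+1)g_r$ and $k\in\{0,1\}$ into Theorem~\ref{thm:generatingfunction2}. The inner sum then collapses: for $k=1$ one has $\min(k,n+l)\le 1$, so only $p=1$ remains and $\binom{k-1}{0}=1$, turning the bracket into $2(n+l)$ in case (A) and $(n+l)$ in case (B); for $k=0$ the coefficient is just the $k=0$ line of $\bar{f}$. Writing $m=n+l=n+g-(2n+1)g_r$ reproduces exactly the formulas of part (3).

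Part (2) is the same evaluation at the bottom of the genus window, $l=-n$ and $l=-n+1$, with $n\le g_r$ replacing the range condition: it forces every $n'<n$ to have maximal genus $(2n'+1)g_r+n'<(2n+1)g_r-n+1$, so no smaller index interferes, and the minimal row ($n+l=0$) of $\bar{f}$ gives $1$ while the next row ($n+l=1$) collapses as above to $2n$ or $4n$ in case (A) and $n$ or $2n$ in case (B). The delicate point, and the main obstacle, is excluding the \emph{larger} indices here: an index $n'\ge n+1$ first appears at genus $(2n'+1)g_r-n'$, which exceeds $(2n+1)g_r-n+1$ precisely when $g_r\ge 2$ but coincides with it when $g_r=1$ (as the trefoil row of Table~\ref{table31} shows at $c=5c_r$). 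Thus the isolation argument for the ``$c>(2n+1)c_r$'' case must either invoke $g_r\ge 2$ or restrict to $c<(2n+3)c_r$, before the next index contributes; once isolation is secured the binomial evaluation is identical to part (3), completing the proof.
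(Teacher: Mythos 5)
Your proposal follows the same route as the paper's own proof: expand the coefficient as $h(r,g,c)=\sum_{n}\bar f\bigl(n,\,g-(2n+1)g_r,\,c-(2n+1)c_r\bigr)$, argue that under each hypothesis only one index $n$ survives, and then evaluate $\bar f$ at $n+l\in\{0,1\}$ (part (2)) or $k\in\{0,1\}$ (part (3)), where the inner sum collapses to its $p=1$ term. Parts (1) and (3) agree with the paper; in (3) your explicit exclusion of larger indices via $(2n'+1)c_r\ge(2n+1)c_r+2c_r>c$ spells out what the paper leaves implicit, and your collapsed binomial evaluations match the paper's line by line.

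The divergence is in part (2), and there your extra care exposes a genuine defect in the paper. The paper's proof of (2) only checks that indices $n'<n$ cannot reach genus $(2n+1)g_r-n+1$ (this is what $n\le g_r$ buys) and then equates $h$ with $\bar f(n,-n+1,k)$ for \emph{all} $k>0$, saying nothing about indices $n'>n$. As you observe, when $g_r\ge 2$ this omission is harmless, since the minimal genus $(2n+3)g_r-(n+1)$ of index $n+1$ strictly exceeds $(2n+1)g_r-n+1$; but when $g_r=1$ the two coincide, and index $n+1$ contributes exactly one extra knot (all $c_i=0$, giving $\bar f(n+1,-(n+1),0)=1$) at $c=(2n+3)c_r$. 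The paper's own data confirm this: for the trefoil ($g_r=1$, $c_r=3$, symmetric case), part (2)(B) with $n=1$ asserts $h(1/3,3,c)=2$ for every $c>9$, yet Table~\ref{table31} (and the displayed term $3s^3t^{15}$ in $f(1/3)$) gives $h(1/3,3,15)=3$. So the corollary as printed, and the paper's proof of it, fail in this boundary case; the additional hypothesis you flag ($g_r\ge 2$, or the restriction $c<(2n+3)c_r$) is exactly what is needed, and with it your argument is complete and correct.
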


\begin{proof}
\begin{enumerate}
\item 
It is obtained by Proposition \ref{thm:crossingnumber} and Proposition \ref{thm:suzukitran}. 
\item The inequality $n \leq g_r$ induces  
\[
(2n - 1) g_r + (n - 1) < (2n + 1) g_r - n  .  
\]
This implies that the maximum genus of type $2n-1$ is less than the minimum genus of type $2n+1$ 
by Proposition \ref{thm:suzukitran}. 
Then we can compute $h(r,g,c)$ taking account into only type $2n+1$.
If $(a_1,a_2, \ldots, a_{2g_r})$ is not symmetric, then  
\begin{align*}
&h(r,(2n+1) g_r - n, (2 n+1) c_r) = \bar{f}(n,-n,0) 
= \Big(
\begin{array}{c}
2n \\ 0
\end{array}
\Big)
= 1 \\
&h(r,(2n+1) g_r - n + 1, (2 n+1) c_r) = \bar{f}(n,-n + 1,0) 
= \Big(
\begin{array}{c}
2n \\ 1
\end{array}
\Big)
= 2n  \\
&h(r,(2n+1) g_r - n + 1, (2 n+1) c_r + k) = \bar{f}(n,-n + 1,k) 
= \Big(
\begin{array}{c}
2n \\ 1
\end{array}
\Big) \cdot 2
= 4n  ,
\end{align*}
where $k > 0$. Similarly if $(a_1,a_2, \ldots, a_{2g_r})$ is symmetric, then  
\begin{align*}
&h(r,(2n+1) g_r - n, (2 n+1) c_r) = \bar{f}(n,-n,0) 
= \frac{1}{2} \Big(
\begin{array}{c}
2n \\ 0
\end{array}
\Big)
+ \frac{1}{2} \Big(
\begin{array}{c}
n \\ 0
\end{array}
\Big)
= 1 \\
&h(r,(2n+1) g_r - n + 1, (2 n+1) c_r) = \bar{f}(n,-n + 1,0) 
= \frac{1}{2} \Big(
\begin{array}{c}
2n \\ 1
\end{array}
\Big)
= n  \\
&h(r,(2n+1) g_r - n + 1, (2 n+1) c_r + k) = \bar{f}(n,-n + 1,k) 
= \Big(
\begin{array}{c}
2n \\ 1
\end{array}
\Big)
= 2n  ,
\end{align*}
where $k > 0$.
\item 
In this case, similarly we can compute $h(r,g,c)$ taking account into only type $2n+1$. Then 
\begin{align*}
&h(r,g,(2n+1) c_r) = 
\bar{f}(n, g - (2 n+1) g_r, 0), \\
&h(r,g,(2n+1) c_r+1)  = 
\bar{f}(n,g - (2n+1) g_r,1)  
\end{align*}
and the values of $\bar{f}(n,l,k)$ of Theorem \ref{thm:generatingfunction2} deduce the statement. 
\end{enumerate}
\end{proof}

If a $2$-bridge knot $K'$ and a natural number $c \in {\mathbb N}$ are given,  
then the number of $2$-bridge knots $K$ with $c$ crossings 
such that $G(K)$ admits an epimorphism onto $G(K')$ is finite. 
However, the corresponding statement for a given genus is quite different in general. 
More precisely, we obtain the following corollary 
by Proposition \ref{thm:suzukitran} and Theorem \ref{thm:generatingfunction2}. 

\begin{corollary}
For a given $2$-bridge knot $K'$ and a natural number $g \in {\mathbb N}$, 
the number of $2$-bridge knots $K$ of genus $g$ such that $G(K)$ admits an epimorphism onto $G(K')$ 
is 
\[
\left\{
\begin{array}{ll}
0 & g < 3 g(K') - 1 \mbox{ or } (2n+1) g(K') + n < g < (2n+3) g(K') - (n+1) \\
1 & (2n-1) g(K') - (n-1) < g = (2n+1) g(K') - n \\
\infty & \mbox{otherwise} 
\end{array}
\right. 
\]
for some $n \in {\mathbb N}$. 
\end{corollary}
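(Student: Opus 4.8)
The plan is to read the statement directly off Proposition~\ref{thm:suzukitran} and Theorem~\ref{thm:generatingfunction2}, organizing the count by the \emph{type} $2n+1$ of the continued fraction expansion. Write $g_r=g(K')$ and $c_r=c(K')$. By Theorem~\ref{thm:ors}, every $2$-bridge knot $K$ admitting an epimorphism $G(K)\to G(K')$ is some $K(\tilde r)$ with $\tilde r$ of type $2n+1$ ($n\ge 1$) with respect to the even expansion of $r$, and by Proposition~\ref{thm:suzukitran} its genus equals $(2n+1)g_r+l$, where $-n\le l\le n$ and $n+l$ is the number of nonzero $c_i$. Hence, for fixed $n$, the attainable genera are exactly the integers in $[(2n+1)g_r-n,\,(2n+1)g_r+n]$; I will abbreviate the endpoints by $m_n=(2n+1)g_r-n$ and $M_n=(2n+1)g_r+n$.

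First I would settle the finiteness question \emph{inside a single type}. By Proposition~\ref{thm:crossingnumber} we have $c(K(\tilde r))=(2n+1)c_r+k$ with $k=\sum_{i=1}^{2n}\bar c_i\ge 0$, and $\bar c_i$ can be positive only when $c_i\neq 0$. If $n+l\ge 1$, i.e.\ $g>m_n$, then some $c_i$ is free, and letting $|c_i|\to\infty$ forces $k\to\infty$, producing knots of unboundedly many crossing numbers and hence infinitely many distinct knots of genus $g$ (crossing number being a knot invariant). If instead $l=-n$, i.e.\ $g=m_n$, then every $c_i=0$, so $k=0$, the crossing number is pinned at $(2n+1)c_r$, and $\varepsilon_1=1$ together with $\varepsilon_i\varepsilon_{i+1}\neq-1$ forces all $\varepsilon_i=1$: there is exactly one such knot. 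This matches $\bar f(n,-n,0)=1$ in both cases of Theorem~\ref{thm:generatingfunction2}, and the fact that $\bar f(n,l,k)$ is nonzero for infinitely many $k$ once $n+l\ge 1$.

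Next I would combine the types. Since $m_{n+1}-m_n=2g_r-1\ge 1$, the minima $m_n$ are strictly increasing, so a genus equals $m_n$ for at most one $n$, and no type $2n'+1$ with $n'>n$ reaches a genus below its own minimum $m_{n'}>m_n$. The only possible extra contribution to $g=m_n$ comes from smaller types, whose largest reach is $M_{n-1}=(2n-1)g_r+(n-1)$; a short computation gives $m_n-M_{n-1}=2g_r-2n+1$, positive exactly when $n\le g_r$. Thus for $n\le g_r$ the value $g=m_n$ lies strictly above the maximum genus $(2n-1)g_r+(n-1)$ of type $2n-1$ — the isolation hypothesis of the middle case — and contributes the single knot found above, giving count $1$; whereas for $n\ge g_r+1$ one has $m_n\le M_{n-1}$, so $g=m_n$ already sits in the type $2n-1$ range above its minimum and inherits infinitely many knots. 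Finally, a genus contributes $0$ exactly when it lies in no range: either below everything, $g<m_1=3g_r-1$, or in a gap $M_n<g<m_{n+1}$, which by $m_{n+1}-M_n=2g_r-2n-1$ contains an integer precisely for $n\le g_r-2$, i.e.\ $(2n+1)g_r+n<g<(2n+3)g_r-(n+1)$. Every remaining genus lies in some range above its minimum, or at a non-isolated minimum, and so gives $\infty$; assembling the three verdicts yields the stated trichotomy.

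The main obstacle is the bookkeeping at the minimum-genus values $m_n$: the crux is recognizing that, although each such value supports only one knot within its own type, for $n\ge g_r+1$ the ranges of consecutive types have begun to overlap, so $m_n$ is no longer isolated and the count jumps from $1$ to $\infty$. Making this precise is exactly the comparison of $m_n$ with $M_{n-1}$, which pins down the cutoff $n\le g_r$ separating the ``$1$'' case from the ``$\infty$'' case. A secondary point needing care is checking that the large-$|c_i|$ constructions yield genuinely distinct knots rather than repetitions; this holds because they realize distinct crossing numbers, and it is already encoded in Theorem~\ref{thm:generatingfunction2}, whose coefficients count knots without multiplicity.
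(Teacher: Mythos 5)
Your proof is correct and follows exactly the route the paper intends: the corollary appears there with no written proof, only the remark that it follows from Proposition \ref{thm:suzukitran} and Theorem \ref{thm:generatingfunction2}, and your argument is precisely that deduction made explicit --- Proposition \ref{thm:suzukitran} pins the genera realized by type $2n+1$ to the integers in $[(2n+1)g(K')-n,\,(2n+1)g(K')+n]$, Theorem \ref{thm:generatingfunction2} gives exactly one knot at the left endpoint ($\bar f(n,-n,0)=1$) and infinitely many at every genus strictly above it, and the trichotomy then reduces to your interval/overlap bookkeeping, with the comparison of $m_n$ against $M_{n-1}$ identifying the cutoff $n\le g(K')$.

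One point deserves emphasis: your analysis yields the condition $(2n-1)g(K')+(n-1) < g = (2n+1)g(K')-n$ for the count to equal $1$, whereas the corollary as printed has $(2n-1)g(K')-(n-1) < g = (2n+1)g(K')-n$. The printed inequality is vacuous --- it holds for every $n$ with $g=(2n+1)g(K')-n$, since it amounts to $0<2g(K')-1$ --- and the literal statement is then false: for $K'=3_1$ (so $g(K')=1$) and $g=3=m_2$, it would assign count $1$, while Table \ref{table31} shows infinitely many genus-$3$ two-bridge knots whose groups surject onto $G(3_1)$ (at least two for every crossing number $\ge 10$). Your version, which compares $g$ with the \emph{maximum} genus $M_{n-1}=(2n-1)g(K')+(n-1)$ of the previous type rather than with its minimum, is the correct statement; the paper's displayed condition contains a sign typo that your proof implicitly corrects.
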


\section{Fibered $2$-bridge knots}
A knot $K$ is called {\it fibered}, 
if the exterior of $K$ is a fiber bundle over the circle $S^1$. 
It is known that a $2$-bridge knot $K(r)$ is fibered 
if and only if all $a_i$'s are $\pm 2$ in the even continued fraction expansion $r = [a_1, a_2, \ldots, a_{2m}]$ 
(see \cite{crowell} \cite{murasugi} and also \cite{godasuzuki}). 
The number $h_f (r,g,c)$ of fibered $2$-bridge knots $K$ of genus $g$ with $c$ crossings 
which admit epimorphisms $\varphi : G(K) \to G(K(r))$ 
is the following.   

\begin{theorem}\label{thm:generatingfunctionfibered}
Let $c_r,g_r$ be the crossing number and the genus of a fibered $2$-bridge knot $K(r)$ respectively. 
We take the even continued fraction expansion $[a_1,a_2,\ldots,a_{2g_r}]$ of $r$.  
\begin{enumerate}
\item 
If there doesn't exists a natural number $n$ such that $g = (2n+1) g_r + n$ and $(2n+1) c_r \leq c \leq (2n+1) c_r + 4n$, 
then $h_f (r,g,c) = 0$. 
\item By using a natural number $n$ satisfying 
$g = (2n+1) g_r + n$ and $(2n+1) c_r \leq c \leq (2n+1) c_r + 4n$, 
\begin{itemize}
\item[(A)] if $(a_1,a_2,\ldots,a_{2g_r})$ is not symmetric, then 
\[
h_f (r,g,c) = 
\Big(
\begin{array}{c}
4n \\ c - (2n+1) c_r
\end{array}
\Big)  ,
\]
\item[(B)] if $(a_1,a_2,\ldots,a_{2g_r})$ is symmetric, then 
\[
h_f (r,g,c) = 
\left\{
\begin{array}{ll}
\frac{1}{2} \Big(
\begin{array}{c}
4n \\ c - (2n+1) c_r
\end{array}
\Big) &  c - (2n+1) c_r \mbox{ : odd} \\
\frac{1}{2} \Big(
\begin{array}{c}
4n \\ c - (2n+1) c_r
\end{array}
\Big)  
+ \frac{1}{2} \Big(
\begin{array}{c}
2n \\ \frac{c - (2n+1) c_r}{2}
\end{array}
\Big) & c - (2n+1) c_r \mbox{ : even}  
\end{array}
\right. .
\]
\end{itemize}
\end{enumerate}
Moreover, the total number of fibered $2$-bridge knots $K$ of genus $g$ 
which admit epimorphisms $\varphi : G(K) \to G(K(r))$ is 
\[
\left\{
\begin{array}{ll}
16^n & (a_1,a_2,\ldots,a_{2g_r}) \mbox{ is not symmetric}, ~ g = (2 n+1) g_r + n\\
\frac{4^n ( 4^n + 1)}{2} & (a_1,a_2,\ldots,a_{2g_r}) \mbox{ is symmetric}, ~ g = (2 n+1) g_r + n\\
0 & otherwise
\end{array}
\right. .
\]
\end{theorem}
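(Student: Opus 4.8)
The plan is to reduce the whole statement to a combinatorial count over the sign data $(\varepsilon_1,\dots,\varepsilon_{2n+1})$ and $(c_1,\dots,c_{2n})$, after first determining exactly which expansions of the form (\ref{rtilde}) yield a fibered knot. Since $K(r)$ is fibered, every $a_j=\pm2$, so in (\ref{rtilde}) the only entries that need not be $\pm2$ are the connectors $2c_i$. If some $c_i=0$, then (using the normalization $\varepsilon_i=\varepsilon_{i+1}$ that is permitted for a vanishing connector) the reduction rule $[\dots,x,0,y,\dots]=[\dots,x+y,\dots]$ merges the two adjacent entries into $\varepsilon_i a_\ast+\varepsilon_{i+1}a_\ast=\pm4$ (or triggers a further chain of reductions, each introducing an entry of absolute value $\ge 4$), which cannot appear in the even expansion of a fibered knot; by uniqueness of the even expansion, $K(\tilde r)$ is then not fibered. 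Conversely, if every $c_i=\pm1$ then (\ref{rtilde}) is already an even expansion with all entries $\pm2$, so $K(\tilde r)$ is fibered. Thus $K(\tilde r)$ is fibered if and only if all $c_i=\pm1$; in particular no $c_i$ vanishes, so Proposition \ref{thm:suzukitran} forces the maximal value $g(K(\tilde r))=(2n+1)g_r+n$, proving part (1).

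Next I would read off the crossing number using the extended functions $\psi(i),\bar\psi(i)$ introduced in the proof of Theorem \ref{thm:generatingfunction2}. With $|c_i|=1$ one has $\bar c_i=2-\psi(i)-\bar\psi(i)\in\{0,1,2\}$, so
\[
k:=c-(2n+1)c_r=\sum_{i=1}^{2n}\bar c_i=4n-\sum_{i=1}^{2n}\bigl(\psi(i)+\bar\psi(i)\bigr),
\]
and since $\sum_i(\psi(i)+\bar\psi(i))$ counts the sign changes, $k$ ranges over $0,1,\dots,4n$; this is precisely the interval $(2n+1)c_r\le c\le(2n+1)c_r+4n$ of part (2). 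It remains to count, for each weight $w=4n-k$, the configurations producing exactly $w$ sign changes, and then to correct for Schubert's equivalences.

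For the count I would pass to $\mathbb F_2$. Writing $\varepsilon_i=(-1)^{e_i}$ (with $e_1=0$), $c_i=(-1)^{d_i}$, and $A=(1-\operatorname{sgn}(a_{2g_r}))/2$, the definitions give $\psi(i)\equiv e_i+d_i+A$ and $\bar\psi(i)\equiv d_i+e_{i+1}+A \pmod 2$, so the map
\[
\Phi:(e_2,\dots,e_{2n+1},d_1,\dots,d_{2n})\longmapsto(\psi(1),\bar\psi(1),\dots,\psi(2n),\bar\psi(2n))
\]
is affine over $\mathbb F_2$ in $4n$ variables. Reading the output coordinates in order lets one solve successively for $d_1,e_2,d_2,e_3,\dots,e_{2n+1}$, so $\Phi$ is a bijection. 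Hence the number of configurations whose sign-change vector has weight $w$ equals the number of weight-$w$ vectors, namely $\binom{4n}{w}=\binom{4n}{k}$. When $(a_1,\dots,a_{2g_r})$ is not symmetric, distinct configurations give distinct knots as in \cite{SZK}, so $h_f(r,g,c)=\binom{4n}{k}$, and summing yields $\sum_k\binom{4n}{k}=2^{4n}=16^n$; this is case (A) together with its total.

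Finally I would treat the symmetric case by the same reversal argument used for Theorem \ref{thm:generatingfunction2}: the identity $K([\varepsilon_1{\bf a},2c_1,\dots,\varepsilon_{2n+1}{\bf a}])=K([\varepsilon_{2n+1}{\bf a},2c_{2n},\dots,\varepsilon_1{\bf a}])$ identifies each configuration with its reversal (renormalized so the leading sign is $1$), so non-palindromic configurations are counted twice and palindromic ones once. A palindromic configuration is determined by its left half $\varepsilon_1,\dots,\varepsilon_{n+1},c_1,\dots,c_n$, hence by $2n$ free binary choices; its sign-change vector is symmetric, forcing $w$, and therefore $k$, to be even, with the weight governed by $n$ independent comparisons, so the number of palindromic configurations with a given even $k$ is $\binom{2n}{k/2}$. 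Subtracting and re-adding gives $h_f(r,g,c)=\tfrac12\binom{4n}{k}+\tfrac12\binom{2n}{k/2}$ for $k$ even and $\tfrac12\binom{4n}{k}$ for $k$ odd, which is case (B), and summing gives $\tfrac12\bigl(16^n+\sum_{j}\binom{2n}{j}\bigr)=\tfrac{4^n(4^n+1)}{2}$. I expect the palindrome bookkeeping to be the main obstacle: one must verify that the reversal involution is exactly the Schubert identification on these fibered expansions, that the normalization $\varepsilon_1=1$ creates no spurious coincidences, and that the fixed configurations are controlled by precisely $n$ independent sign comparisons so that their weighted count is $\binom{2n}{k/2}$. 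The characterization via $c_i=\pm1$ and the bijectivity of $\Phi$ are the routine parts.
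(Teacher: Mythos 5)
Your proposal is correct and follows essentially the same route as the paper's proof: fiberedness forces every $c_i=\pm1$ (the paper asserts this directly), hence the genus is $(2n+1)g_r+n$, the crossing number is $(2n+1)c_r+4n$ minus the number of sign changes, the non-symmetric count is $\binom{4n}{k}$ (your affine $\mathbb{F}_2$ bijection is the paper's sign-change enumeration in algebraic form, with the constant $A$ absorbing the paper's separate treatment of $a_{2g_r}<0$), and the symmetric case is handled by the same reversal/palindrome halving, giving the identical totals $16^n$ and $\frac{4^n(4^n+1)}{2}$. One cosmetic slip: the palindromic configurations are governed by $2n$ (not $n$) independent adjacent-sign comparisons in the free half-sequence $(\varepsilon_1,c_1,\ldots,c_n,\varepsilon_{n+1})$, each contributing $0$ or $2$ to the total number of sign changes, which is exactly what yields the $\binom{2n}{k/2}$ you correctly state.
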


\begin{proof}
We consider the fibered $2$-bridge knot $K=K(\tilde{r})$ for a rational number
\[
 \tilde{r} = 
[\varepsilon_1 {\bf a}, 2 c_1, 
\varepsilon_2 {\bf a}^{-1}, 2 c_2, 
\varepsilon_3 {\bf a}, 2 c_3, 
\varepsilon_4 {\bf a}^{-1}, 2 c_4, 
\ldots, 
\varepsilon_{2n} {\bf a}^{-1}, 2 c_{2n}, \varepsilon_{2n+1} {\bf a}], 
\]
where ${\bf a} = (a_1,a_2,\ldots,a_{2 g_r})$. 
Since $K(\tilde{r})$ is fibered, all $c_i$'s are $\pm 1$ 
and then the genus of $K(\tilde{r})$ is $(2n+1) g_r + n$. 
The crossing number of $K(\tilde{r})$ is 
\[
(2n+1) c_r + \sum_{i=1}^{2n} \left( 2 |c_i| - \psi (i) - \bar{\psi} (i) \right)
= (2n+1) c_r + 4 n - \sum_{i=1}^{2n} \left( \psi (i) + \bar{\psi} (i) \right) .
\] 
First, we see the case $(a_1,a_2,\ldots,a_{2 g_r})$ is not symmetric and $a_{2 g_r} >0$. 
If the signs change $k$ times in a sequence 
$(\varepsilon_1, c_1, \varepsilon_2, c_2, \varepsilon_3, c_3, \varepsilon_4, \ldots, c_{2n}, \varepsilon_{2n+1})$,
then the crossing number of $K(\tilde{r})$ is $(2n+1) c_r + 4 n - k$. 
There are  
\[
\left( 
\begin{array}{c}
4n \\
k
\end{array}
\right)
= 
\left( 
\begin{array}{c}
4n \\
4n - k
\end{array}
\right)
= 
\left( 
\begin{array}{c}
4n \\
c(K(\tilde{r})) - (2n+1) c_r
\end{array}
\right) 
\]
cases for such sequences. 
On the other hand, if $a_{2 g_r} <0$, 
the crossing number of $K(\tilde{r})$ is $(2n+1) c_r + 4 n - k$ for the case 
the signs change $4n - k$ times in the sequence. We have the same number of cases. 
Similarly, we can get $h_f (r,g,c)$ for symmetric case. 

Moreover, we see 
\begin{align*}
& \sum_{k=0}^{4n} 
\Big(
\begin{array}{c}
4n \\ k
\end{array}
\Big) = 2^{4n} = 16^n, \\ 
&
\sum_{k=0, k: odd}^{4n} 
\frac{1}{2} 
\Big(
\begin{array}{c}
4n \\ k
\end{array}
\Big)
+ \sum_{k=0, k:even}^{4n} 
\frac{1}{2} 
\left( \Big(
\begin{array}{c}
4n \\ k
\end{array}
\Big)
+ \frac{1}{2} 
\Big(
\begin{array}{c}
2n \\ \frac{k}{2}
\end{array}
\Big) \right) \\
& \qquad = \frac{1}{2} \left( 2^{4n-1} + ( 2^{4n-1} + 2^{2n} ) \right)
= \frac{4^n ( 4^n + 1)}{2} .
\end{align*}
Therefore we get the last part of the statement. 
\end{proof}

\begin{remark}
It is known that fibered knot groups admit epimorphisms only onto fibered knot groups, 
see \cite{silverwhitten} and \cite{kitano-suzuki3}. 
For any fixed natural number $g$, there are infinitely many $2$-bridge knots of genus $g$, 
however, there are finitely many fibered $2$-bridge knots of genus $g$.  
For example, fibered $2$-bridge knots of genus $1$ are only $3_1$ and $4_1$.  
\end{remark}

\begin{example}\label{ex:fibered3162}
Table \ref{tablefibered31} and \ref{tablefibered62} show 
that the number of fibered $2$-bridge knots which admit epimorphisms onto $3_1$ and $6_2$ respectively, 
according to Theorem \ref{thm:generatingfunctionfibered}.  

\begin{table}[h]
\begin{tabular}{c||c|c|c|c|c|c|c|c|c|c|c|c|c|c|c}
\diagbox{$g$}{$c$} & 8 & 9 & 10 & 11 & 12 & 13 & 14 & 15 & 16 & 17 & 18 & 19 & 20 & 21 & 22\\
\hline
1 & 0 & 0 & 0 & 0 & 0 & 0 & 0 & 0 & 0 & 0 & 0 & 0 & 0 & 0 & 0 \\
\hline
2 & 0 & 0 & 0 & 0 & 0 & 0 & 0 & 0 & 0 & 0 & 0 & 0 & 0 & 0 & 0 \\
\hline
3 & 0 & 0 & 0 & 0 & 0 & 0 & 0 & 0 & 0 & 0 & 0 & 0 & 0 & 0 & 0 \\
\hline
4 & 0 & 1 & 2 & 4 & 2 & 1 & 0 & 0 & 0 & 0 & 0 & 0 & 0 & 0 & 0 \\
\hline
5 & 0 & 0 & 0 & 0 & 0 & 0 & 0 & 0 & 0 & 0 & 0 & 0 & 0 & 0 & 0 \\
\hline
6 & 0 & 0 & 0 & 0 & 0 & 0 & 0 & 0 & 0 & 0 & 0 & 0 & 0 & 0 & 0 \\
\hline
7 & 0 & 0 & 0 & 0 & 0 & 0 & 0 & 1 & 4 & 16 & 28 & 38 & 28 & 16 & 4 \\
\hline
8 & 0 & 0 & 0 & 0 & 0 & 0 & 0 & 0 & 0 & 0 & 0 & 0 & 0 & 0 & 0 \\
\hline
9 & 0 & 0 & 0 & 0 & 0 & 0 & 0 & 0 & 0 & 0 & 0 & 0 & 0 & 0 & 0 \\
\hline
10 & 0 & 0 & 0 & 0 & 0 & 0 & 0 & 0 & 0 & 0 & 0 & 0 & 0 & 1 & 6 \\
\hline
11 & 0 & 0 & 0 & 0 & 0 & 0 & 0 & 0 & 0 & 0 & 0 & 0 & 0 & 0 & 0
\end{tabular}
\caption{Fibered onto $3_1$}
\label{tablefibered31}
\end{table}

\begin{table}[h]
\begin{tabular}{c||c|c|c|c|c|c|c|c|c|c|c|c|c|c|c}
\diagbox{$g$}{$c$} & 17 & 18 & 19 & 20 & 21 & 22 & 23 & 24 & 25 & 26 & 27 & 28 & 29 & 30 & 31 \\
\hline
4 & 0 & 0 & 0 & 0 & 0 & 0 & 0 & 0 & 0 & 0 & 0 & 0 & 0 & 0 & 0 \\
\hline
5 & 0 & 0 & 0 & 0 & 0 & 0 & 0 & 0 & 0 & 0 & 0 & 0 & 0 & 0 & 0 \\
\hline
6 & 0 & 0 & 0 & 0 & 0 & 0 & 0 & 0 & 0 & 0 & 0 & 0 & 0 & 0 & 0 \\
\hline
7 & 0 & 1 & 4 & 6 & 4 & 1 & 0 & 0 & 0 & 0 & 0 & 0 & 0 & 0 & 0 \\
\hline
8 & 0 & 0 & 0 & 0 & 0 & 0 & 0 & 0 & 0 & 0 & 0 & 0 & 0 & 0 & 0 \\
\hline
9 & 0 & 0 & 0 & 0 & 0 & 0 & 0 & 0 & 0 & 0 & 0 & 0 & 0 & 0 & 0 \\
\hline
10 & 0 & 0 & 0 & 0 & 0 & 0 & 0 & 0 & 0 & 0 & 0 & 0 & 0 & 0 & 0 \\
\hline
11 & 0 & 0 & 0 & 0 & 0 & 0 & 0 & 0 & 0 & 0 & 0 & 0 & 0 & 0 & 0 \\
\hline
12 & 0 & 0 & 0 & 0 & 0 & 0 & 0 & 0 & 0 & 0 & 0 & 0 & 0 & 1 & 8 \\
\hline
13 & 0 & 0 & 0 & 0 & 0 & 0 & 0 & 0 & 0 & 0 & 0 & 0 & 0 & 0 & 0 
\end{tabular}
\caption{Fibered onto $6_2$}
\label{tablefibered62}
\end{table}

\end{example}

In some case, all $2$-bridge knots whose knot groups admit an epimorphism onto another knot group are fibered. 

\begin{corollary}
Let $c_r,g_r$ be the crossing number and the genus of a fibered $2$-bridge knot $K(r)$ respectively. 
If there exists a natural number $n$ such that 
$g = (2n+1) g_r + n$ and $c = (2n+1) c_r$ or $(2n+1) c_r + 1$, 
then $h(r,g,c) = h_f(r,g,c)$, 
namely, all $2$-bridge knots whose knot groups admit an epimorphism onto $G(K(r))$ are fibered. 
\end{corollary}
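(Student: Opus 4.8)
The plan is to show that, under the stated hypotheses, every $2$-bridge knot $K$ contributing to $h(r,g,c)$ is in fact fibered. Since fibered knots form a subset of all knots, the set of $2$-bridge knots of genus $g$ with $c$ crossings admitting an epimorphism onto $G(K(r))$ would then coincide with the fibered such knots, giving $h(r,g,c)=h_f(r,g,c)$. So fix such a knot $K=K(\tilde r)$, of genus $g=(2n+1)g_r+n$ and crossing number $c\in\{(2n+1)c_r,(2n+1)c_r+1\}$, admitting an epimorphism onto $G(K(r))$. By Theorem \ref{thm:ors}, $\tilde r$ has a continued fraction expansion of some type $2n'+1$ with respect to ${\bf a}=(a_1,\ldots,a_{2g_r})$, with connectors $2c_i$ and signs $\varepsilon_i$.

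First I would pin down the type from both invariants at once. By Proposition \ref{thm:suzukitran} the genus of $K(\tilde r)$ is $(2n'+1)g_r+n'-|\{i\mid c_i=0\}|$, so its maximum over type $2n'+1$ is $(2n'+1)g_r+n'$; since $g=(2n+1)g_r+n\le(2n'+1)g_r+n'$ and $t\mapsto(2g_r+1)t+g_r$ is increasing, this forces $n'\ge n$. On the other hand, by Proposition \ref{thm:crossingnumber} (in the extended form used in the proof of Theorem \ref{thm:generatingfunction2}) the crossing number of $K(\tilde r)$ equals $(2n'+1)c_r+\sum_{i=1}^{2n'}\bar c_i\ge(2n'+1)c_r$; since $c\le(2n+1)c_r+1$ and $c_r\ge 1$, the inequality $(2n'+1)c_r\le(2n+1)c_r+1$ forces $n'\le n$. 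Hence $n'=n$, the genus attains its maximum so that all $c_i\ne 0$, and $k:=c-(2n+1)c_r\in\{0,1\}$ with $k=\sum_{i=1}^{2n}\bar c_i$.

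Next I would run the arithmetic on $\bar c_i=2|c_i|-\psi(i)-\bar\psi(i)$. Each $c_i\ne 0$ gives $|c_i|\ge 1$, and $\psi(i)+\bar\psi(i)\le 2$. If $k=0$, then every $\bar c_i=0$, so $2|c_i|=\psi(i)+\bar\psi(i)\le 2$, whence $|c_i|=1$. If $k=1$, then exactly one $\bar c_{i_0}=1$ and the rest vanish; the vanishing ones give $|c_i|=1$ as before, while for $i_0$ the relation $2|c_{i_0}|=1+\psi(i_0)+\bar\psi(i_0)$ forces its right-hand side to be even, so $\psi(i_0)+\bar\psi(i_0)=1$ and again $|c_{i_0}|=1$. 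In every case all $c_i=\pm 1$. Since $K(r)$ is fibered, all entries of ${\bf a}$ are $\pm 2$, and all connectors $2c_i=\pm 2$, so the displayed expansion of $\tilde r$ is a continued fraction all of whose entries are non-zero even numbers, namely $\pm 2$; by uniqueness it is the even continued fraction of $K(\tilde r)$, and the fibering criterion then shows $K(\tilde r)$ is fibered. This proves that every contributing knot is fibered, hence $h(r,g,c)=h_f(r,g,c)$.

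The main obstacle is the two-sided pinning of the type in the second step: one must trap $n'$ between $n$ and $n$ by using the genus bound ($n'\ge n$) and the crossing-number bound ($n'\le n$) simultaneously, and it is precisely the near-minimality of the crossing number, $c\le(2n+1)c_r+1$ together with $c_r\ge 1$, that excludes all higher types. As an independent consistency check one may instead compare the explicit formulas: substituting $g=(2n+1)g_r+n$, so that $n+g-(2n+1)g_r=2n$, into the formulas of Corollary \ref{cor:specificcase}(3) reproduces exactly the values given by Theorem \ref{thm:generatingfunctionfibered} at $c=(2n+1)c_r$ and $c=(2n+1)c_r+1$, in both the symmetric and non-symmetric cases.
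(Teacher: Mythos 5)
Your proposal is correct, but it proves the statement by a genuinely different route than the paper. The paper's proof is a purely numerical comparison: it evaluates $h(r,(2n+1)g_r+n,c)$ for $c=(2n+1)c_r$ and $(2n+1)c_r+1$ via Corollary \ref{cor:specificcase}(3), evaluates $h_f$ at the same arguments via Theorem \ref{thm:generatingfunctionfibered}, and checks that the values coincide ($1$ and $4n$ in the non-symmetric case, $1$ and $2n$ in the symmetric case); since the fibered knots form a subset of those counted by $h$, equal cardinalities give the set-level conclusion. This is exactly what you relegate to your final ``consistency check'' paragraph. Your main argument instead works at the level of the knots themselves: you use the genus upper bound $(2n'+1)g_r+n'$ from Proposition \ref{thm:suzukitran} to force $n'\geq n$, the crossing-number lower bound $(2n'+1)c_r$ (in the extended form from the proof of Theorem \ref{thm:generatingfunction2}) to force $n'\leq n$, conclude all $c_i\neq 0$ from maximality of the genus, and then run the parity argument on $\bar c_i=2|c_i|-\psi(i)-\bar\psi(i)$ to get $|c_i|=1$ in both cases $k=0$ and $k=1$, so that every entry of the expansion of $\tilde r$ is $\pm 2$ and the fibering criterion applies. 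Each step is sound (in particular the pinning $2(n'-n)c_r\leq 1$ with $c_r\geq 3$, and the observation that $2|c_{i_0}|=1+\psi(i_0)+\bar\psi(i_0)$ forces $\psi(i_0)+\bar\psi(i_0)=1$, are correct). What your approach buys: it is uniform in the symmetric and non-symmetric cases, it does not depend on the correctness of the two counting formulas, and it explains structurally \emph{why} the two counts agree -- the contributing knots are literally the same. What the paper's approach buys: brevity, since both formulas were already established, and the conclusion follows by substitution alone.
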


\begin{proof}
First, we discuss the case $(a_1,a_2,\ldots,a_{2g_r})$ is not symmetric for $r = [a_1, a_2, \ldots, a_{2 g_r}]$. 
By Corollary \ref{cor:specificcase}, we have 
\begin{align*}
&h(r,(2n+1) g_r + n, (2n+1) c_r) = 
\left(
\begin{array}{c}
2n \\ n+ (2n+1) g_r + n (2n+1) g_r 
\end{array}
\right) 
= 
\left(
\begin{array}{c}
2n \\ 2 n 
\end{array}
\right) 
= 1,  \\
&h(r,(2n+1) g_r + n, (2n+1) c_r+1) = 
2 \cdot 2n \cdot 
\left(
\begin{array}{c}
2n \\ 2n 
\end{array}
\right) 
= 4n .
\end{align*}
On the other hand, Theorem \ref{thm:generatingfunctionfibered} induces 
\begin{align*}
&h_f(r,(2n+1) g_r + n, (2n+1) c_r) = 
\left(
\begin{array}{c}
4n \\ (2n+1) c_r - (2n+1) c_r 
\end{array}
\right) 
= 
\left(
\begin{array}{c}
4n \\ 0 
\end{array}
\right) 
= 1,  \\
&h_f(r,(2n+1) g_r + n, (2n+1) c_r+1) = 
\left(
\begin{array}{c}
4n \\ 1 
\end{array}
\right) 
= 4n .
\end{align*}
Similarly, if $(a_1,a_2,\ldots,a_{2g_r})$ is symmetric for $r = [a_1, a_2, \ldots, a_{2 g_r}]$, 
we see 
\begin{align*}
&h(r,(2n+1) g_r + n, (2n+1) c_r) = 
\frac{1}{2}\left(
\begin{array}{c}
2n \\ 2n 
\end{array}
\right) 
+ \frac{1}{2}
\left(
\begin{array}{c}
n \\ n 
\end{array}
\right) 
= 1,  \\
&h(r,(2n+1) g_r + n, (2n+1) c_r+1) = 
2n \cdot 
\left(
\begin{array}{c}
2n \\ 2n 
\end{array}
\right) 
= 2n ,\\
&h_f(r,(2n+1) g_r + n, (2n+1) c_r) = 
\frac{1}{2}\left(
\begin{array}{c}
4n \\ 0 
\end{array}
\right) 
+
\frac{1}{2}\left(
\begin{array}{c}
2n \\ 0 
\end{array}
\right) 
= 1,  \\
&h_f(r,(2n+1) g_r + n, (2n+1) c_r+1) = 
\frac{1}{2}\left(
\begin{array}{c}
4n \\ 1 
\end{array}
\right) 
= 2n .
\end{align*}
This completes the proof. 
\end{proof}

\section{Degree one map}\label{sect:degone}

In this section, we consider degree one maps between the exteriors of knots. 
Let $E(K)$ be the exterior of a knot $K$. 
If a map $f_* : H_3 (E(K), \partial E(K)) \to H_3(E(K'), \partial E(K'))$ induced by 
$f :  (E(K), \partial E(K)) \to (E(K'), \partial E(K'))$ sends the fundamental class 
$[E(K), \partial E(K)]$ to $\pm [E(K'), \partial E(K')]$, 
the map $f$ is called a {\it degree one map}. 
It is known that degree one map induces an epimorphism between their knot groups 
(see \cite{hempel}, also \cite{kitano-suzuki2}, \cite{kitano-suzuki4}). 
Boileau, Boyer, Reid, and Wang showed in \cite{BBRW} that 
any epimorphism from a hyperbolic $2$-bridge knot group onto a non-trivial knot group 
is induced by a non-zero degree map. 
Moreover, Gonzal\'{e}z-Ac\~{u}na, Ram\'{i}nez in \cite{GR} studied a similar property for 
epimorphisms from $2$-bridge knot groups to $(2,p)$ torus knot group. 

Combined with \cite[Remark 6.3 (1)]{ORS} and \cite[Theorem 8.3]{ALSS}, we have that 
if 
\[
\sum_{i=1}^{2n+1} \varepsilon_i = \pm 1 ,
\]
in Theorem \ref{thm:ors}, 
then the epimorphism is induced by degree one map. 
We denote by $h_1(r,g,c)$ the number of $2$-bridge knots 
whose knot groups admit epimorphisms onto $G(K(r))$ induced by degree one maps. 

\begin{proposition} 
Let $c_r,g_r$ be the crossing number and the genus of a $2$-bridge knot $K(r)$ respectively. 
We take the even continued fraction expansion $[a_1,a_2,\ldots,a_{2g_r}]$ of $r$.  
\begin{enumerate}
\item For a natural number $n$ satisfying $g > (2n-1) g_r + (n-1)$, we have
\begin{align*}
&h_1(r,g,(2n+1) c_r) = 0, \\
&h_1(r,g,(2n+1) c_r + 1) = \frac{1}{n} h(r,g,(2n+1) c_r + 1).
\end{align*}
\item For a natural number $n$ satisfying $n \leq g_r$, we have  
\begin{align*}
&h_1(r,(2n+1) g_r -n,c) = 0, \\
&h_1(r,(2n+1) g_r -n+1),c)  \\
& =  
\left\{
\begin{array}{ll}
0 & c - (2n+1) c_r \mbox{ is negative or even} \\
4 &  (a_1,a_2,\ldots,a_{2g_r}) \mbox{ is not symmetric}, ~ c - (2n+1) c_r \mbox{ is positive and odd} \\
2 &  (a_1,a_2,\ldots,a_{2g_r}) \mbox{ is symmetric}, ~ c - (2n+1) c_r \mbox{ is positive and odd} 
\end{array}
\right. .
\end{align*}
\end{enumerate}
\end{proposition}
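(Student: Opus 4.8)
The plan is to convert the degree-one condition $\sum_{i=1}^{2n+1}\varepsilon_i=\pm1$ into a statement about the number of positive signs among $\varepsilon_1,\dots,\varepsilon_{2n+1}$ and then to read that number off from the crossing data. Write $N_+$ for the number of indices $i$ with $\varepsilon_i=+1$, so that $\sum_i\varepsilon_i=2N_+-(2n+1)$; the epimorphism is induced by a degree one map precisely when $N_+\in\{n,n+1\}$. The whole argument rests on one structural observation: the sign changes of the sequence $(\varepsilon_i)$ occur exactly at the indices $i$ for which $\bar c_i$ is odd. Indeed, using the extended $\psi(i),\bar\psi(i)$ from the proof of Theorem \ref{thm:generatingfunction2}, if $c_i=0$ we may assume $\varepsilon_i\varepsilon_{i+1}\neq-1$, so $\varepsilon_i=\varepsilon_{i+1}$; if $c_i\neq0$, writing $\sigma=\mathrm{sgn}(a_{2g_r}c_i)$ one has $\psi(i)=1\Leftrightarrow\varepsilon_i=-\sigma$ and $\bar\psi(i)=1\Leftrightarrow\varepsilon_{i+1}=-\sigma$, whence $\varepsilon_i\neq\varepsilon_{i+1}$ if and only if exactly one of $\psi(i),\bar\psi(i)$ equals $1$. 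Since $\bar c_i=2|c_i|-\psi(i)-\bar\psi(i)\equiv\psi(i)+\bar\psi(i)\pmod 2$, this says $\varepsilon_i\neq\varepsilon_{i+1}$ if and only if $\bar c_i$ is odd. Because $\varepsilon_1=1$, the positions of the odd $\bar c_i$ determine the entire sequence $(\varepsilon_i)$, hence $N_+$. I expect this lemma to be the main obstacle: it is the one step requiring careful sign bookkeeping of the end entries $\varepsilon_i a_{2g_r}$ of the blocks, and it is exactly what makes $h_1$ computable from the data already counted by $h$.

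With the lemma in hand the two statements follow by bookkeeping, after reducing to type $2n+1$. By Proposition \ref{thm:suzukitran} the genus hypotheses (namely $g>(2n-1)g_r+(n-1)$ in (1), and $n\le g_r$ with the prescribed genus in (2)) exclude the types $\le 2n-1$, and by Proposition \ref{thm:crossingnumber} the crossing numbers $(2n+1)c_r$ and $(2n+1)c_r+1$, being smaller than $(2n+3)c_r$, exclude the types $\ge 2n+3$ in part (1); in part (2) any higher type reaching the prescribed genus does so only at its minimal genus, where all $c_i=0$ and no sign change occurs. Thus only type $2n+1$ contributes.

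For part (1) with $c=(2n+1)c_r$ we have $k=\sum_i\bar c_i=0$, so every $\bar c_i$ is even, there is no sign change, $\varepsilon_i\equiv1$ and $N_+=2n+1\notin\{n,n+1\}$ for $n\ge1$; hence $h_1(r,g,(2n+1)c_r)=0$. For $c=(2n+1)c_r+1$ we have $k=1$, so exactly one $\bar c_{i_0}=1$ is odd and all others vanish; there is a single sign change, forcing $\varepsilon_1=\dots=\varepsilon_{i_0}=1$ and $\varepsilon_{i_0+1}=\dots=\varepsilon_{2n+1}=-1$, so $N_+=i_0$ and the map is degree one exactly when $i_0\in\{n,n+1\}$. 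Setting $l=g-(2n+1)g_r$, a degree-one configuration is determined by the choice $i_0\in\{n,n+1\}$ (two choices), the sign of $c_{i_0}$ (two choices), and the remaining $n+l-1$ nonzero positions among the other $2n-1$ slots, giving $h_1=4\binom{2n-1}{n+l-1}$. Reading $h(r,g,(2n+1)c_r+1)$ off from $\bar f(n,l,1)$ gives $4n\binom{2n-1}{n+l-1}$ via $\binom{2n}{n+l}=\tfrac{2n}{n+l}\binom{2n-1}{n+l-1}$, so $h_1=\tfrac1n h$; in the symmetric case the reversal identity of case (B) halves both sides, so the relation persists.

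For part (2), the genus $(2n+1)g_r-n$ forces $l=-n$, i.e.\ all $c_i=0$, whence $\varepsilon_i\equiv1$, $N_+=2n+1$, and $h_1=0$. The genus $(2n+1)g_r-n+1$ forces exactly one nonzero $c_j$, so $\bar c_j=c-(2n+1)c_r=k$ is the only entry that can be odd; if $k$ is negative (impossible) or even there is no sign change and the knot is not degree one, while if $k$ is positive and odd there is a single sign change at $j$, giving $N_+=j$ and degree one exactly when $j\in\{n,n+1\}$. For non-symmetric $\mathbf a$ this is the two positions times the two signs of $c_j$, i.e.\ $4$ knots; for symmetric $\mathbf a$ the reversal identity pairs $j=n$ with $j=n+1$ with no fixed point, halving the count to $2$. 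This reproduces the three cases of the stated formula and completes the proof.
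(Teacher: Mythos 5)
Your proof is correct and follows essentially the same route as the paper's: reduce to type $2n+1$ using the genus bound (Proposition on genera) and the crossing bound (Proposition on crossing numbers), identify the degree with $\sum_i\varepsilon_i$ by locating the sign changes of $(\varepsilon_i)$ at the odd $\bar c_i$, and count the two admissible positions $i_0\in\{n,n+1\}$ together with the two sign choices (halved by reversal in the symmetric case). The only differences are in your favor: you prove the sign-change lemma explicitly where the paper just invokes the argument of \cite{SZK}, and you correctly handle two points the paper glosses over, namely that higher types can reach the genus $(2n+1)g_r-n+1$ when $g_r=1$ but only at minimal genus with all $\varepsilon_i=+1$ (hence never degree one), and that the paper's blanket ratio $h_1=\frac{2}{2n}h$ in part (2) fails when $c-(2n+1)c_r$ is positive and even, where the correct value is $0$.
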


\begin{proof}
Let $K(\tilde{r})$ be a $2$-bridge knot corresponding to 
a continued fraction expansion of type $2n+1$ : 
\begin{align*}
 \tilde{r} = 
[\varepsilon_1 {\bf a}, 2 c_1, 
\varepsilon_2 {\bf a}^{-1}, 2 c_2, 
\varepsilon_3 {\bf a}, 2 c_3, 
\varepsilon_4 {\bf a}^{-1}, 2 c_4, 
\ldots, 
\varepsilon_{2n} {\bf a}^{-1}, 2 c_{2n}, \varepsilon_{2n+1} {\bf a}], 
\end{align*}
with respect to ${\bf a} = (a_1,a_2,\ldots,a_{2 g_r})$. 
Suppose that an epimorphism from $G(K(\tilde{r}))$ onto $G(K(r))$ is induced by a degree one map. 
\begin{enumerate}
\item 
By $g > (2n-1) g_r + (n-1)$, we consider only a continued fraction expansion of type $2n+1$. 
If the crossing number of $c(K(\tilde{r}))$ is $(2n+1) c_r$,  then 
\[
 k = \sum_{i=1}^{2n} \bar{c}_i = \sum_{i=1}^{2n} 2 |c_i| - \psi (i) - \bar{\psi} (i) = 0.
\]
It turns out that all the $\bar{c}_i$'s are zero. 
By the argument of \cite[Proof of Theorem 5.1]{SZK}, all the $\varepsilon_i$'s are $+ 1$. 
Therefore it follows that the degree is $2n+1$. 

If the crossing number of $K(\tilde{r})$ is $(2n+1) c_r + 1$, then $k=1$. 
It turns out that only one element of $(\bar{c}_1, \bar{c}_2,\ldots,\bar{c}_{2n})$ is $1$ and the others are zero. 
Similarly, for non-zero $\bar{c}_i$, the degree is $2i - 2n - 1$. 
Then the degree is $1$ for the case either $\bar{c}_n$ or $\bar{c}_{n+1}$ is $1$. 
Therefore we get
\[
h_1(r,g,(2n+1) c_r + 1) = \frac{2}{2n} h(r,g,(2n+1) c_r + 1) .
\]
\item By the assumption $g_r \geq n$, we see that 
\[
(2n+1) g_r - n+1 < (2n+3) g_r - n. 
\]
Then it is enough to consider only a continued fraction expansion of type $2n+1$ again. 
First, we get $h_1(r,(2n+1) g_r -n,c) = 0$ 
by $(1)$ and Corollary \ref{cor:specificcase} (2). 

If the genus of $K(\tilde{r})$ is $(2n+1) g_r - n + 1$, then 
only one element of $(\bar{c}_1, \bar{c}_2,\ldots,\bar{c}_{2n})$ is non-zero and the others are zero. 
Suppose that $\bar{c}_i$ is non-zero.
If $\bar{c}_i$ is even, then the degree is $2n+1$ again. 
If $\bar{c}_i$ is odd, by the same argument, the degree is $2i - 2n - 1$. 
Moreover, the crossing number of $K(\tilde{r})$ is $(2n+1) + \bar{c}_i$. 
Therefore it deduces 
\[
h_1(r,(2n+1) g_r -n+ 1,c) = \frac{2}{2n} h(r,(2n+1) g_r -n+ 1,c) .
\]
We have the explicit number of the right hand side by Corollary \ref{cor:specificcase}. 
This completes the proof. 
\end{enumerate}
\end{proof}

\begin{corollary}
If there exits a degree one map $f :  (E(K), \partial E(K)) \to (E(K'), \partial E(K'))$
for $2$-bridge knots $K,K'$, 
then  
\[
c(K) \geq 3 c(K') + 1 , \qquad  g(K) \geq 3 g(K') . 
\]
\end{corollary}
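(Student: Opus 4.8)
The plan is to reduce everything to the structure theorem and the two counting formulas already in hand. A degree one map $f : E(K) \to E(K')$ induces an epimorphism $G(K) \to G(K')$, and this epimorphism is meridional, so Theorem \ref{thm:ors} applies: I may write $K = K(\tilde r)$ and $K' = K(r)$ where $\tilde r$ has a continued fraction expansion of type $2n+1$ with respect to the even expansion $\mathbf{a} = (a_1,\ldots,a_{2g_r})$ of $r$, with $\varepsilon_i = \pm 1$ and $c_i \in \mathbb{Z}$. Since we only count non-isomorphic epimorphisms, $n \geq 1$, and $K'$ is a nontrivial $2$-bridge knot, so $g_r \geq 1$ and $c_r \geq 1$. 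The degree of $f$ equals $\sum_{i=1}^{2n+1}\varepsilon_i$, so the degree one hypothesis becomes $\sum_{i=1}^{2n+1}\varepsilon_i = \pm 1$.

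For the crossing number, first I would apply Proposition \ref{thm:crossingnumber}, which gives $c(K) = (2n+1)c_r + k$ with $k = \sum_{i=1}^{2n}\bar{c}_i \geq 0$. The key observation is that the degree one hypothesis forces $k \geq 1$: if $k = 0$ then every $\bar{c}_i$ vanishes, and, exactly as in the proof of the preceding Proposition (following \cite{SZK}), this can occur only when all $\varepsilon_i = +1$, whence $\sum_{i=1}^{2n+1}\varepsilon_i = 2n+1 \geq 3$, contradicting $\sum \varepsilon_i = \pm 1$. Hence $k \geq 1$, and since $n \geq 1$ we get $c(K) = (2n+1)c_r + k \geq 3c_r + 1$ uniformly in $n$, with no case analysis needed.

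For the genus I would use Proposition \ref{thm:suzukitran}, which gives $g(K) = (2n+1)g_r + n - z$ where $z = |\{\,i : c_i = 0\,\}| \leq 2n$, so $g(K) \geq (2n+1)g_r - n$. When $n \geq 2$ this already suffices, since $(2n+1)g_r - n - 3g_r = (2n-2)g_r - n \geq (2n-2) - n = n-2 \geq 0$ using $g_r \geq 1$; here the degree hypothesis is not even required. The only remaining case is $n = 1$, where $g(K) = 3g_r + 1 - z$ with $z \in \{0,1,2\}$, so I must rule out $z = 2$. If $z = 2$, that is $c_1 = c_2 = 0$, then the normalization "$c_i \neq 0$ or $\varepsilon_i\varepsilon_{i+1} \neq -1$" forces $\varepsilon_1 = \varepsilon_2 = \varepsilon_3$, hence all equal $+1$ (as $\varepsilon_1 = 1$), giving $\sum \varepsilon_i = 3 \neq \pm 1$, again contradicting degree one. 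Thus $z \leq 1$ and $g(K) \geq 3g_r$.

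The delicate points are the two implications "$\bar{c}_i = 0$ for all $i$ forces $\varepsilon_i \equiv +1$" and its $n=1$ genus analogue "$c_1 = c_2 = 0$ forces $\varepsilon_i \equiv +1$"; both rest on the normalization of the expansion together with the bookkeeping of $\psi(i),\bar{\psi}(i)$ in Proposition \ref{thm:crossingnumber}. I expect making the first of these fully rigorous to be the main obstacle, while the rest is a clean split into $n=1$ versus $n \geq 2$ combined with $g_r \geq 1$. I would also remark that both inequalities can be read off directly from the preceding Proposition's vanishing statement $h_1(r,g,(2n+1)c_r)=0$ and from Corollary \ref{cor:specificcase}, which offers an alternative, purely formulaic route to the same conclusion.
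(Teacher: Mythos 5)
Your proof is correct and follows essentially the same route as the paper: the corollary there is read off from the preceding Proposition on $h_1$, whose proof uses exactly your ingredients (degree $=\sum_i \varepsilon_i$, Propositions \ref{thm:crossingnumber} and \ref{thm:suzukitran}, and the observation that $k=0$, or all $c_i=0$ under the normalization, forces every $\varepsilon_i=+1$ and hence degree $2n+1$). Your explicit split into $n=1$ versus $n\geq 2$ for the genus bound is just an unfolding of the same mechanism, as you yourself note at the end.
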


As concluding examples, we obtain the explicit numbers of $h_1(1/3,g,c)$ and $h(7/11,g,c)$, 
which are shown in Table \ref{tabledegreeone31} and Table \ref{tabledegreeone62} . 

\begin{table}[h]
\begin{tabular}{c||c|c|c|c|c|c|c|c|c|c|c|c|c|c|c}
\diagbox{$g$}{$c$} & 8 & 9 & 10 & 11 & 12 & 13 & 14 & 15 & 16 & 17 & 18 & 19 & 20 & 21 & 22\\
\hline
1 & 0 & 0 & 0 & 0 & 0 & 0 & 0 & 0 & 0 & 0 & 0 & 0 & 0 & 0 & 0 \\
\hline
2 & 0 & 0 & 0 & 0 & 0 & 0 & 0 & 0 & 0 & 0 & 0 & 0 & 0 & 0 & 0 \\
\hline
3 & 0 & 0 & 2 & 0 & 2 & 0 & 2 & 0 & 2 & 0 & 2 & 0 & 2 & 0 & 2 \\
\hline
4 & 0 & 0 & 2 & 3 & 6 & 4 & 10 & 7 & 16 & 8 & 20 & 11 & 24 & 12 & 28 \\
\hline
5 & 0 & 0 & 0 & 0 & 0 & 0 & 0 & 0 & 6 & 7 & 18 & 12 & 30 & 19 & 44 \\
\hline
6 & 0 & 0 & 0 & 0 & 0 & 0 & 0 & 0 & 6 & 12 & 46 & 48 & 126 & 108 & 256 \\
\hline
7 & 0 & 0 & 0 & 0 & 0 & 0 & 0 & 0 & 2 & 7 & 30 & 46 & 130 & 149 & 370 \\
\hline
8 & 0 & 0 & 0 & 0 & 0 & 0 & 0 & 0 & 0 & 0 & 0 & 0 & 0 & 0 & 20 \\
\hline
9 & 0 & 0 & 0 & 0 & 0 & 0 & 0 & 0 & 0 & 0 & 0 & 0 & 0 & 0 & 10 \\
\hline
10 & 0 & 0 & 0 & 0 & 0 & 0 & 0 & 0 & 0 & 0 & 0 & 0 & 0 & 0 & 2 \\
\hline
11 & 0 & 0 & 0 & 0 & 0 & 0 & 0 & 0 & 0 & 0 & 0 & 0 & 0 & 0 & 0 
\end{tabular}
\caption{degree one map onto $3_1$}
\label{tabledegreeone31}
\end{table}

\begin{table}[h]
\begin{tabular}{c||c|c|c|c|c|c|c|c|c|c|c|c|c|c|c}
\diagbox{$g$}{$c$} & 17 & 18 & 19 & 20 & 21 & 22 & 23 & 24 & 25 & 26 & 27 & 28 & 29 & 30 & 31 \\
\hline
4 & 0 & 0 & 0 & 0 & 0 & 0 & 0 & 0 & 0 & 0 & 0 & 0 & 0 & 0 & 0 \\
\hline
5 & 0 & 0 & 0 & 0 & 0 & 0 & 0 & 0 & 0 & 0 & 0 & 0 & 0 & 0 & 0 \\
\hline
6 & 0 & 0 & 4 & 0 & 4 & 0 & 4 & 0 & 4 & 0 & 0 & 4 & 0 & 4 & 0 \\
\hline
7 & 0 & 0 & 4 & 4 & 12 & 8 & 20 & 12 & 28 & 16 & 36 & 20 & 44 & 24 & 52 \\
\hline
8 & 0 & 0 & 0 & 0 & 0 & 0 & 0 & 0 & 0 & 0 & 0 & 0 & 0 & 0 & 0 \\
\hline
9 & 0 & 0 & 0 & 0 & 0 & 0 & 0 & 0 & 0 & 0 & 0 & 0 & 0 & 0 & 4 \\
\hline
10 & 0 & 0 & 0 & 0 & 0 & 0 & 0 & 0 & 0 & 0 & 0 & 0 & 0 & 0 & 12 \\
\hline
11 & 0 & 0 & 0 & 0 & 0 & 0 & 0 & 0 & 0 & 0 & 0 & 0 & 0 & 0 & 12 \\
\hline
12 & 0 & 0 & 0 & 0 & 0 & 0 & 0 & 0 & 0 & 0 & 0 & 0 & 0 & 0 & 4 \\
\hline
13 & 0 & 0 & 0 & 0 & 0 & 0 & 0 & 0 & 0 & 0 & 0 & 0 & 0 & 0 & 0 
\end{tabular}
\caption{degree one map onto $6_2$}
\label{tabledegreeone62}
\end{table}

\section{Unknotting number one}

In this section, we consider a relationship between epimorphism of knot groups and their unknotting numbers. 
We have a problem that if there exists an epimorphism from $G(K)$ onto $G(K')$,  
then is the unknotting number of $K$ greater than or equal to that of $K'$? 
Some problems related to epimorphisms between knot groups are proposed in \cite{kitano-suzuki2}. 

We denote by $u(K)$ the unknotting number of a knot $K$. 
First, we recall the following theorem, which characterizes $2$-bridge knots with unknotting number one. 

\begin{theorem}[Kanenobu-Murakami \cite{KM}]\label{thm:km}
Let $K$ be a nontrivial $2$-bridge knot. 
Then the following three conditions are equivalent. 
\begin{enumerate}
\item $u(K)=1$. 
\item There exist an odd integer $p \, (> 1)$ and coprime, positive integeres $m$ and $n$ 
with $2mn = p \pm 1$ and $K$ is equivalent to $K(2 n^2/p)$. 
\item $K$ can be expressed as $K([a,a_1,a_2,\ldots,a_k,\pm 2, - a_k,\ldots,-a_2,-a_1])$. 
\end{enumerate}
\end{theorem}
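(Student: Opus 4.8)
The plan is to establish the cyclic chain $(3) \Rightarrow (1) \Rightarrow (2) \Rightarrow (3)$, treating the combinatorial equivalence $(2) \Leftrightarrow (3)$ and the geometric implication $(1) \Rightarrow (2)$ as the two genuinely separate tasks; the former is continued-fraction bookkeeping, while the latter carries all the real content.

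First I would dispose of $(3) \Rightarrow (1)$, the elementary direction. If $K = K([a, a_1, \ldots, a_k, \pm 2, -a_k, \ldots, -a_1])$, the central $\pm 2$ records a pair of half-twists in the corresponding rational tangle, and changing the single crossing there replaces the entry $\pm 2$ by $0$. Using the standard identity $[\ldots, x, 0, y, \ldots] = [\ldots, x+y, \ldots]$, the resulting palindrome $[a, a_1, \ldots, a_k, 0, -a_k, \ldots, -a_1]$ collapses from the centre outward: at each stage the merge produces $a_j + (-a_j) = 0$, leaving a new central $0$ flanked by $a_{j-1}$ and $-a_{j-1}$, until one reaches $[a, 0]$, which represents the trivial knot. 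Hence a single crossing change unknots $K$, so $u(K) \le 1$, and $u(K) = 1$ since $K$ is nontrivial.

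Next, for $(2) \Leftrightarrow (3)$, I would argue purely arithmetically at the level of continued fractions. Evaluating the palindrome $[a, a_1, \ldots, a_k, \pm 2, -a_k, \ldots, -a_1]$ through the associated matrix product $\prod \begin{pmatrix} a_i & 1 \\ 1 & 0 \end{pmatrix}$ and reading off numerator and denominator, the antisymmetry $a_j \leftrightarrow -a_j$ forces the resulting rational number to be Schubert-equivalent to one of the form $2n^2/p$, and conversely I would show that any fraction $2n^2/p$ with $2mn = p \pm 1$ and $\gcd(m,n) = 1$ admits an expansion of exactly this symmetric shape with a single $\pm 2$ in the centre. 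Matching the two descriptions up to the equivalence of Schubert's theorem then yields the biconditional; this step is combinatorial and uses no geometry.

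The substantive step is $(1) \Rightarrow (2)$, and this is where I expect the main obstacle. The tool is the Montesinos trick: a single crossing change on $K$ lifts, in the double branched cover $\Sigma_2(K) = L(p,q)$, to a half-integral Dehn surgery on a knot in $S^3$, and since $u(K) = 1$ the knot obtained after the change is the unknot, whose double branched cover is $S^3$. Thus $L(p,q)$ must be obtainable from $S^3$ by a half-integral surgery along a knot. The hard part is converting this surgery constraint into the precise arithmetic: I would invoke the classification of lens spaces arising from half-integral surgeries, together with the identification of the linking form of $L(p,q)$, to force $q \equiv 2n^2 \pmod{p}$ for some $n$ with $2mn = p \pm 1$, and then appeal once more to Schubert's theorem to realise $K$ as $K(2n^2/p)$. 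The delicate point, and the crux of the whole theorem, is ensuring that the surgery is genuinely half-integral and that the lens-space data is sharp enough to exclude every fraction not of this special form; everything else in the proof is comparatively routine.
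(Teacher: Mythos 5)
The paper does not prove this statement at all: it is quoted, with attribution, from Kanenobu--Murakami \cite{KM} and used later as a black box, so there is no internal proof to compare yours against. What you have written is, in substance, a reconstruction of the original Kanenobu--Murakami argument, and its three-part decomposition is sound: $(3)\Rightarrow(1)$ by undoing one crossing of the central clasp and collapsing the antipalindromic continued fraction with the identity $[\ldots,x,0,y,\ldots]=[\ldots,x+y,\ldots]$; $(2)\Leftrightarrow(3)$ by matrix/continued-fraction computation; and $(1)\Rightarrow(2)$ by the Montesinos trick, which exhibits the double branched cover $L(p,q)$ as a half-integral surgery on a knot in $S^3$.

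One point in your final step should be named precisely, since it carries all the weight. The ``classification of lens spaces arising from half-integral surgeries'' you invoke is exactly the conjunction of two theorems: the Cyclic Surgery Theorem of Culler--Gordon--Luecke--Shalen, which forces a knot admitting a non-integral surgery with cyclic fundamental group to be a torus knot (or unknot), and Moser's classification of torus-knot surgeries, by which $(p/2)$-surgery on $T(m,n)$ yields a lens space precisely when $|p-2mn|=1$, in which case it is $L(p,2n^2)$. The linking form of $L(p,q)$, which you also mention, only recovers the weaker congruence $q\equiv 2n^2 \pmod p$; it cannot produce the constraint $2mn=p\pm1$ with $\gcd(m,n)=1$, and that constraint is exactly what separates unknotting number one from the rest. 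With those two results cited explicitly, your outline is the standard (and historically the original) proof.
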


%
%
%

By Theorem \ref{thm:km}, 
$2$-bridge knots of genus one with unknotting number one are 
$K([a, \pm 2])$ where $a$ is a positive even number. 
We can find $2$-bridge knots with unknotting number one 
whose knot groups admit epimorphism onto the knot group of $K([a, \pm 2])$. 

\begin{theorem}\label{thm:unknotting}
Let $K'$ be a $2$-bridge knot of $g(K')=1$ with $u(K')=1$, 
namely, $K'$ can be expressed as $K([a,\pm 2])$ where $a$ is a positive even number. 
There exist infinitely many $2$-bridge knots $K$ with $u(K)=1$ 
such that the knot groups $G(K)$ admit epimorphisms onto $G(K')$. 
Moreover, the minimum value of genera of such $2$-bridge knots is $3$. 
\end{theorem}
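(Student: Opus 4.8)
The plan is to realize each desired knot $K$ by a single continued fraction that simultaneously witnesses both properties: it should be of Ohtsuki--Riley--Sakuma type $2n+1$ with respect to the even continued fraction ${\bf a}=(a,\pm2)$ of $K'$, so that Theorem~\ref{thm:ors} supplies an epimorphism $G(K)\to G(K')$, and it should be expressible in the Kanenobu--Murakami palindromic form $[b,b_1,\dots,b_k,\pm2,-b_k,\dots,-b_1]$ of Theorem~\ref{thm:km}(3), so that $u(K)=1$. Concretely, I would work inside the type-$(2n+1)$ skeleton $[\varepsilon_1{\bf a},2c_1,\varepsilon_2{\bf a}^{-1},2c_2,\dots,\varepsilon_{2n+1}{\bf a}]$ and solve the anti-symmetry equations, which require the entry in position $(k+2)+j$ to equal the negative of the entry in position $(k+2)-j$, for the parameters $\varepsilon_i\in\{\pm1\}$ (with $\varepsilon_1=1$) and $c_i\in\mathbb Z$, treating the central $\pm2$ and the unpaired leading entry $a$ separately.

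For the existence of infinitely many such $K$ I would exhibit an explicit family indexed by the type. The key point is a parity matching: for even $n$ the middle block $\varepsilon_{n+1}{\bf a}$ is an ${\bf a}$-block, and its terminal entry $2\varepsilon_{n+1}$ occupies exactly the center position $3n+2$ of a length-$(6n+2)$ palindromic expansion, so the center can legitimately equal $\pm2$ even when $a>2$. Solving the anti-symmetry equations then forces a clean alternating solution, e.g.\ $\tilde r_n=[a,2,a,2,\dots,a,2,-a,-2,\dots,-a,-2]$, which for $n=2$ is $[a,2,a,2,a,2,a,2,-a,-2,-a,-2,-a,-2]$. Each $\tilde r_n$ is visibly in Kanenobu--Murakami form, hence $u(K(\tilde r_n))=1$, and is of type $2n+1$ with respect to $(a,\pm2)$, hence admits the epimorphism by Theorem~\ref{thm:ors}. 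Since the genus $3n+1$ (and the crossing number, by Proposition~\ref{thm:crossingnumber}) grows with $n$, these knots are pairwise distinct, giving infinitely many; restricting to even $n$ suffices and sidesteps the odd-$n$ parity mismatch.

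To pin the minimum genus at $3$ I would argue both bounds. For the upper bound I exhibit a genus-$3$ member directly: for $K'=K([a,2])$ take $K([a,2,2a,-2,-2a,-2])$, which equals the type-$3$ expansion with ${\bf a}=(a,2)$, $(\varepsilon_1,\varepsilon_2,\varepsilon_3)=(1,-1,-1)$, $c_1=a$, $c_2=0$ (the vanishing $c_2$ merges the two central $-a$'s into $-2a$), and is in Kanenobu--Murakami form with center $-2$; the case $K'=K([a,-2])$ is handled by $K([a,-2,2a,2,-2a,2])$. Both have even continued fraction of length $6$, hence genus $3$. For the lower bound, the corollary to Proposition~\ref{thm:suzukitran} gives $g(K)\ge 3g(K')-1=2$, so only genus $2$ must be excluded. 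By Proposition~\ref{thm:suzukitran} genus $2$ forces $n=1$ with $c_1=c_2=0$, and the normalization $c_i\ne0$ or $\varepsilon_i\varepsilon_{i+1}\ne-1$ then forces all $\varepsilon_i=1$; hence the unique genus-$2$ candidate is $K([a,4,2a,2])$ (resp.\ $K([a,-4,2a,-2])$). Computing its fraction yields $K((16a+6)/(16a^2+10a+1))$, and I would show via the arithmetic criterion of Theorem~\ref{thm:km}(2) that this is never of the form $K(2N^2/p)$ with $2mn=p\pm1$ and $\gcd(m,n)=1$, so its unknotting number exceeds $1$.

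The hard part will be this genus-$2$ exclusion. Unlike the constructions, which are verified by inspection once the right continued fraction is written down, ruling out $u(K([a,4,2a,2]))=1$ for every even $a$ is a genuine number-theoretic statement: I must show that for $p=16a^2+10a+1$ none of the admissible factorizations $2mn=p\pm1$ produces $2n^2\equiv\pm(16a+6)^{\pm1}\pmod p$, i.e.\ that the relevant quadratic-residue and divisibility conditions fail uniformly in $a$. As a sanity check, $a=2$ gives $K(38/85)$, for which a direct congruence computation confirms $u>1$. A secondary technical point is to confirm that the even-$n$ alternating family satisfies every anti-symmetry equation for all $a$ and all even $n$, i.e.\ that the parameter solution found at $n=2$ persists; this is a routine but careful induction on the block structure. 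The distinctness of the family then follows immediately from Propositions~\ref{thm:crossingnumber} and~\ref{thm:suzukitran}.
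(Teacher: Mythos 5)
Your constructive steps match the paper's proof almost exactly, so they are sound: your alternating family $\tilde r_n$ (for even $n$) is precisely the family the paper uses, just reindexed --- the paper writes it as a type-$(4n+1)$ expansion, which is your $n=2m$ case, so the parity-matching induction you defer is exactly what the paper's presentation already encodes; your genus-$3$ example $K([a,2,2a,-2,-2a,-2])$ for $K'=K([a,2])$ is the same knot the paper exhibits; and your example $K([a,-2,2a,2,-2a,2])$ for $K'=K([a,-2])$ is a correct type-$3$ expansion (take $\varepsilon_2=\varepsilon_3=-1$, $c_1=a$, $c_2=0$) that sits directly in the Kanenobu--Murakami form of Theorem \ref{thm:km}(3) --- in fact slightly cleaner than the paper's choice $K([a,-2,2a-2,2,-2a,2])$, which requires several continued-fraction manipulations before its unknotting number becomes visible. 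The reduction of the genus-$2$ case to the unique candidates $K([a,4,2a,2])$ and $K([a,-4,2a,-2])$ also agrees with the paper.

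The genuine gap is exactly where you flagged it: you never carry out the exclusion $u(K([a,4,2a,2]))\neq 1$, only stating what must be shown and checking $a=2$. Since this exclusion is the entire content of the claim that the minimum genus is $3$ (the corollary to Proposition \ref{thm:suzukitran} only rules out genus $\leq 1$), the theorem is not proved as written. The missing step is, however, elementary, and the paper settles it in a few lines: by Schubert's theorem the even numerators $q$ with $K(q/p)=K([a,4,2a,2])$, where $p=16a^2+10a+1$, are exactly $16a+6$ and $8a^2+3a$ (this uses the identity $(16a+6)(8a^2+3a)=p(8a+1)-1$), so Theorem \ref{thm:km}(2) forces $2N^2=16a+6$ or $2N^2=8a^2+3a$ for some integer $N$. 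Writing $a=2a'$, the first case gives $N^2=16a'+3\equiv 3 \pmod{16}$, impossible because squares are congruent to $0,1,4,9$ modulo $16$; the second gives $N^2=16a'^2+3a'$, impossible because $(4a')^2<16a'^2+3a'<(4a'+1)^2$. Your proposed formulation --- ruling out $2N^2\equiv\pm(16a+6)^{\pm1}\pmod{p}$ over all factorizations $2mn=p\pm1$ --- is a heavier route to the same point; listing the even residues representing the knot class first, as the paper does, is what collapses the problem to these two square-detection facts. An analogous computation disposes of $K([a,-4,2a,-2])$.
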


\begin{proof}
We can construct an infinite series of $2$-bridge knots $K_n$ with $u(K_n)=1$ 
such that $G(K_n)$ admits an epimorphism onto $G(K')$ as follows. 
For a natural number $n$, consider a $2$-bridge knot 
\[
K_n= K([\underbrace{a,\overbrace{\pm 2,a, \pm 2,a, \pm 2,\ldots,a}^{6n}, \pm 2}_{6n+2},
\underbrace{\overbrace{-a,\mp 2,-a,\mp2,-a,\mp2,\ldots,-a,\mp2}^{6n}}_{6n}]).
\]
By Theorem \ref{thm:km}, the unknotting number of $K_n$ is one. 
Since $a$ is an even number, 
this continued fraction expansion is of type $4n+1$ with respect to $(a,\pm 2)$  
and then $G(K_n)$ admits an epimorphism onto $G(K([a,\pm 2]))$. 

For the minimum genus, first we discuss $K' =K([a,2])$. 
The knot group of a $2$-bridge knot 
\[
K = K([\underbrace{a,2},2a,\underbrace{-2,-a},0,\underbrace{-a,-2}]) = K([a,\overbrace{2,2a},-2,\overbrace{-2a,-2}]) 
\]
admits an epimorphim onto $G(K([a,2]))$. 
The unknotting number of $K$ is $1$ and the genus is $3$. 
On the other hand, 
if the knot group of a $2$-bridge knot $K$ of genus $2$ admits an epimorphism onto $G(K[a,2])$ than 
\[
K = K([a,2,0,2,a,0,a,2]) = K([a,4,2a,2]) 
\]
(see Theorem \ref{thm:generatingfunction2} and \cite{suzukitran}).
The corresponding rational numbers to $K$ with even numerators are 
\[
\frac{16 a + 6}{16 a^2 + 10 a +1}, \quad \frac{8 a^2 + 3 a}{16 a^2 + 10 a +1}. 
\]
Here $(16 a + 6)(8 a^2 + 3 a) = (16 a^2 + 10 a +1)(8a + 1) - 1$. 
If $u(K)= 1$, these numerators can be expressed as $2 n^2$ 
by Theorem \ref{thm:km}, that is, 
$16a' + 3$ or $16 a'^2 + 3 a'$ is a square number, where $a = 2 a'$. 
Suppose that $b^2 = 16 a' +3$. 
We consider $b = 16b', 16 b' + 1, 16 b' + 2, \ldots, 16 b' + 15$, then $b^2 \equiv 0,1,4,9 \mod 16$.  
This contradicts $16 a' + 3 \equiv 3 \mod 16$. 
Therefore $16a' + 3$ is not a square number. 
Moreover, $(4a')^2 < 16 a'^2 + 3 a' < (4 a' + 1)^2$ implies that $16 a'^2 + 3 a'$ is also not a square number. 
Hence,  the numerators cannot be expressed as $2 n^2$ and $u(K) \neq 1$. 

Next, we discuss $K' =K([a,-2])$. 
The knot group of a $2$-bridge knot 
\begin{align*}
K & = K([\underbrace{a,-2},2a-2,\underbrace{2,-a},0,\underbrace{-a,2}]) = K([a,-2,2a-2,2,-2a,2]) \\
& = K([a-1,1,0,1,2a-3,1,1,2a-2,1,1]) = K([a-1,2,2a-3,1,1,2a-2,2]) \\
& = K([a-1,\overbrace{2,2a-2},-2,\overbrace{-(2a-2),-2}])
\end{align*}
admits an epimorphism onto $G(K([a,-2]))$. The genus of $K$ is $3$. 
On the other hand, if the knot group of a $2$-bridge knot $K$ of genus $2$ admits an epimorphism onto $G(K([a,-2]))$, 
then 
\[
K = K([a,-2,0,-2,a,0,a,-2]) = K([a,-4,2a,-2]). 
\]
The corresponding rational numbers with even numerators are 
\[
\frac{176 a - 18}{176 a^2 - 62 a - 1}, \quad \frac{8 a^2 - 3 a}{176 a^2 - 62 a - 1}. 
\]
Here $(176 a - 18)(8 a^2 - 3 a) = (176 a^2 - 62 a - 1)(8a - 1) - 1$. 
Similarly, we can show that neither $(176(2a') - 18)/2$ nor $(8 (2a')^2 - 3 (2a'))/2$ is a square number.  
Then we obtain $u(K) \neq 1$. 
\end{proof}
 
Note that the genus of $K_n$ in the proof of Theorem \ref{thm:unknotting} is $6n+1$ and that 
the crossing number is 
\[
\left\{
\begin{array}{ll}
(6n+1) a + 2 (6n+1) - 1 = (6n+1) a + 12 n +1 & \mbox{ if } K' = K([a,2]) \\
(6n+1) a + 2 (6n+1) - 12n = (6n+1) a + 2 & \mbox{ if } K' = K([a,-2]) 
\end{array}
\right. .
\]
 
\begin{remark}
The knot group of a $2$-bridge knot 
\[
K = K([\underbrace{a,2},-a,\underbrace{2,a},-2,\underbrace{-a,-2}]) = K([2,\overbrace{a,2,-a},-2,\overbrace{a,-2,-a}]) 
\]
admits an epimorphism onto $G(K([a,2]))$.  
The crossing number of $K$ is $4a + 8 - 3=4a +5$. 
This knot $K$ seems to attain the minimum value of the crossing numbers of $K$ with $u(K)=1$ 
which admits an epimorphism onto $G(K([a,2]))$. 

Similarly, The knot group of a $2$-bridge knot 
\[
K = K([\underbrace{a,-2},a-2,\underbrace{2,-a},2,\underbrace{-a,2}]) 
\]
admits an epimorphim onto $G(K([a,-2]))$, whose crossing number of $K$ is $4a + 6 - 6=4a$.  
The mirror image of $K$ can be expressed as 
\begin{align*}
K([2,-a,2,-a,2,a-2,-2,a]) &= K([1,1,a-2,2,a-2,1,1,a-3,2,a-1]) \\
& = K([1,1,a-2,2,a-2,1,1,a-3,2,a-2,1]) \\
& = K([1,\overbrace{1,a-2,2,a-2},2,\overbrace{-(a-2),-2,-(a-2),-1}]).
\end{align*} 
Then we get $u(K)=1$. 
This knot $K$ seems to attain the minimum value of the crossing numbers of $K$ with $u(K)=1$ 
which admits an epimorphism onto $G(K([a,-2]))$.
\end{remark}

\begin{example}\label{ex:unknotting31}
We can get Table \ref{tableunkotting31} and Table \ref{tableunknotting62} by computer program, 
which shows the number of $2$-bridge knots with unknotting number one which admit epimorphisms 
onto $G(3_1)$ and $G(6_2)$ respectively.  

\begin{table}[h]
\begin{tabular}{c||c|c|c|c|c|c|c|c|c|c|c|c|c|c|c}
\diagbox{$g$}{$c$} & 8 & 9 & 10 & 11 & 12 & 13 & 14 & 15 & 16 & 17 & 18 & 19 & 20 & 21 & 22\\
\hline
1 & 0 & 0 & 0 & 0 & 0 & 0 & 0 & 0 & 0 & 0 & 0 & 0 & 0 & 0 & 0 \\
\hline
2 & 0 & 0 & 0 & 0 & 0 & 0 & 0 & 0 & 0 & 0 & 0 & 0 & 0 & 0 & 0 \\
\hline
3 & 0 & 0 & 1 & 0 & 1 & 0 & 0 & 0 & 0 & 0 & 0 & 0 & 0 & 0 & 0 \\
\hline
4 & 0 & 0 & 1 & 0 & 1 & 0 & 0 & 0 & 0 & 0 & 0 & 0 & 0 & 0 & 0 \\
\hline
5 & 0 & 0 & 0 & 0 & 0 & 0 & 0 & 0 & 0 & 0 & 1 & 0 & 1 & 0 & 0 \\
\hline
6 & 0 & 0 & 0 & 0 & 0 & 0 & 0 & 0 & 1 & 0 & 3 & 0 & 2 & 0 & 0 \\
\hline
7 & 0 & 0 & 0 & 0 & 0 & 0 & 0 & 0 & 1 & 0 & 2 & 0 & 1 & 0 & 0 \\
\hline
8 & 0 & 0 & 0 & 0 & 0 & 0 & 0 & 0 & 0 & 0 & 0 & 0 & 0 & 0 & 0 \\
\hline
9 & 0 & 0 & 0 & 0 & 0 & 0 & 0 & 0 & 0 & 0 & 0 & 0 & 0 & 0 & 1 \\
\hline
10 & 0 & 0 & 0 & 0 & 0 & 0 & 0 & 0 & 0 & 0 & 0 & 0 & 0 & 0 & 1 \\
\hline
11 & 0 & 0 & 0 & 0 & 0 & 0 & 0 & 0 & 0 & 0 & 0 & 0 & 0 & 0 & 0 
\end{tabular}
\caption{unknotting number one onto $3_1$}
\label{tableunkotting31}
\end{table}

\begin{table}[h]
\begin{tabular}{c||c|c|c|c|c|c|c|c|c|c|c|c|c|c|c}
\diagbox{$g$}{$c$} & 17 & 18 & 19 & 20 & 21 & 22 & 23 & 24 & 25 & 26 & 27 & 28 & 29 & 30 & 31 \\
\hline
4 & 0 & 0 & 0 & 0 & 0 & 0 & 0 & 0 & 0 & 0 & 0 & 0 & 0 & 0 & 0 \\
\hline
5 & 0 & 0 & 0 & 0 & 0 & 0 & 0 & 0 & 0 & 0 & 0 & 0 & 0 & 0 & 0 \\
\hline
6 & 0 & 0 & 0 & 0 & 0 & 0 & 0 & 0 & 0 & 0 & 0 & 0 & 0 & 0 & 0 \\
\hline
7 & 0 & 0 & 1 & 0 & 1 & 0 & 0 & 0 & 0 & 0 & 0 & 0 & 0 & 0 & 0 \\
\hline
8 & 0 & 0 & 0 & 0 & 0 & 0 & 0 & 0 & 0 & 0 & 0 & 0 & 0 & 0 & 0 \\
\hline
9 & 0 & 0 & 0 & 0 & 0 & 0 & 0 & 0 & 0 & 0 & 0 & 0 & 0 & 0 & 0 \\
\hline
10 & 0 & 0 & 0 & 0 & 0 & 0 & 0 & 0 & 0 & 0 & 0 & 0 & 0 & 0 & 0 \\
\hline
11 & 0 & 0 & 0 & 0 & 0 & 0 & 0 & 0 & 0 & 0 & 0 & 0 & 0 & 0 & 0 \\
\hline
12 & 0 & 0 & 0 & 0 & 0 & 0 & 0 & 0 & 0 & 0 & 0 & 0 & 0 & 0 & 0 \\
\hline
13 & 0 & 0 & 0 & 0 & 0 & 0 & 0 & 0 & 0 & 0 & 0 & 0 & 0 & 0 & 0 
\end{tabular}
\caption{unknotting number one onto $6_2$}
\label{tableunknotting62}
\end{table}

The following $2$-bridge knots with unknotting number one admit epimorphisms 
onto $6_2 = K([2,-2,2,2])$: 
\begin{align*}
&K([\underbrace{2,-2,2,2},-2,\underbrace{2,2,-2,2},-2,\underbrace{-2,2,-2,-2}]) \\
&= K([2,\overbrace{2,-2,2,2,-2,2},-2,\overbrace{-2,2,-2,-2,2,-2}]), \\
&K([\underbrace{2,-2,2,2},2,\underbrace{-2,-2,2,-2},-2,\underbrace{-2,2,-2,-2}]) \\ 
&=K([2,\overbrace{2,-2,2,2,2,-2},2,\overbrace{2,-2,-2,-2,2,-2}]).
\end{align*}
The both genera are $7$ and the crossing numbers are $19$ and $21$ respectively. 
\end{example}

\begin{proposition}\label{prop:gg}
\begin{enumerate}
\item 
For any natural number $g \geq 3$, 
there exists a pair of $2$-bridge knots $K,K'$ of $g(K)=g$ with $u(K)=u(K')=1$ 
such that $G(K)$ admits an epimorphism onto $G(K')$. 
\item 
For any natural number $g'$, 
there exists a pair of $2$-bridge knots $K,K'$ of $g(K')=g'$ with $u(K)=u(K')=1$ 
such that $G(K)$ admits an epimorphism onto $G(K')$. 
\end{enumerate}
\end{proposition}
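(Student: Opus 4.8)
The plan is to reduce both parts to the two detection criteria already available: Kanenobu--Murakami (Theorem \ref{thm:km}, condition (3)) to certify $u=1$, and Ohtsuki--Riley--Sakuma (Theorem \ref{thm:ors}) to certify the epimorphism. For a given pair $(K,K')$ I will exhibit a single continued fraction for $K$ that is simultaneously (i) of ORS type $2n+1$ with respect to the even continued fraction $\mathbf a=(a_1,\dots,a_{2g_r})$ of $K'$, which forces the epimorphism $G(K)\to G(K')$, and (ii) expressible in the antisymmetric Kanenobu--Murakami form $[b,b_1,\dots,b_\ell,\pm2,-b_\ell,\dots,-b_1]$, which forces $u(K)=1$. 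Choosing $K'$ itself to satisfy $u(K')=1$ then produces the desired pair, and the genus is read off from Proposition \ref{thm:suzukitran}. The genus-$3$ knot over $K([a,\pm2])$ built in Theorem \ref{thm:unknotting}, and the further examples in the following Remark, are the base cases of this scheme.

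For part (1) I fix the genus-one base $K'=K([a,\pm2])$, so $g_r=1$ and an ORS type $2n+1$ expansion has genus $(2n+1)+n-z=3n+1-z$, where $z$ is the number of vanishing $c_i$ (Proposition \ref{thm:suzukitran}). As $z$ runs over $0,\dots,2n$ this realizes every genus in the window $[n+1,3n+1]$, and since consecutive windows overlap for all $n$, their union exhausts every integer $\ge 3$. For each target $g\ge 3$ I then write down an explicit type $2n+1$ expansion over $K([a,\pm2])$ in which the signs $\varepsilon_i$ and junctions $c_i$ are chosen so that, after merging the zero junctions via the move $[\dots,x,0,y,\dots]=[\dots,x+y,\dots]$, the continued fraction becomes palindromic in the Kanenobu--Murakami sense -- a distinguished first entry followed by a central $\pm2$ flanked by negation-reflected halves -- exactly as in the genus-$3$ model $K([a,2,2a,-2,-2a,-2])$. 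Since $K([a,\pm2])$ has $u=1$, this yields the required pairs, the minimum genus $3$ being the one already isolated in Theorem \ref{thm:unknotting}.

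For part (2) I first produce, for each $g'$, a base $K'$ of genus $g'$ with $u(K')=1$: the all-$\pm2$ antisymmetric knot $K([2,\underbrace{2,\dots,2}_{g'-1},\pm2,\underbrace{-2,\dots,-2}_{g'-1}])$ has even continued fraction of length $2g'$ (hence genus $g'$ by Proposition \ref{thm:suzukitran}) and is manifestly of Kanenobu--Murakami form. I then build $K$ over this $K'$ by the same simultaneous ORS$+$KM ansatz as in part (1): a type $3$ (or slightly higher) antisymmetric expansion with respect to the even continued fraction $\mathbf a$ of $K'$, generalizing the Remark's knot $K([2,a,2,-a,-2,a,-2,-a])$, so that the epimorphism and $u(K)=1$ hold at once. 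The antisymmetry of $\mathbf a$ (coming from the KM shape of $K'$) is what makes the over-construction inherit a palindromic structure.

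The main obstacle, common to both parts, is the compatibility of the two normal forms. The ORS form is organized as blocks $\varepsilon_i\mathbf a^{\pm1}$ separated by even junctions $2c_i$, whereas the Kanenobu--Murakami form is organized around a distinguished first entry followed by a single central $\pm2$ with a negation--reflection symmetry; reconciling them demands a carefully balanced choice of $(\varepsilon_i,c_i)$ together with continued-fraction reductions (merging zeros, splitting or combining adjacent entries) to reveal the palindrome, and this compatibility is genuinely base-dependent -- a naive transport of the genus-$3$ template to an arbitrary base fails. Forcing the resulting genus to equal the prescribed value $g$ (respectively $g'$) while simultaneously preserving both forms, through the bookkeeping of zero junctions in Proposition \ref{thm:suzukitran}, is the delicate heart of the argument.
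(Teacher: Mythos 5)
You have correctly identified the paper's strategy---exhibit a single continued fraction that is simultaneously an ORS expansion of type $2n+1$ with respect to the even expansion of $K'$ (Theorem \ref{thm:ors}, certifying the epimorphism) and of Kanenobu--Murakami antisymmetric shape (Theorem \ref{thm:km}(3), certifying $u=1$)---but you never exhibit such a fraction, and that exhibition is the entire content of the proof. In part (1), the window argument from Proposition \ref{thm:suzukitran} (genus $3n+1-z$ where $z$ is the number of zero junctions, so the windows $[n+1,3n+1]$ cover every $g\ge 3$) only shows that every such genus is realized by \emph{some} knot admitting an epimorphism onto a genus-one base; it says nothing about realizing it by a knot of Kanenobu--Murakami shape. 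That this is a genuine obstruction rather than bookkeeping is visible in the paper itself: genus $2$ lies in the $n=1$ window over $K([a,2])$, yet the proof of Theorem \ref{thm:unknotting} shows that the only genus-$2$ knot mapping onto $G(K([a,2]))$ is $K([a,4,2a,2])$, which never has unknotting number one. So KM-compatibility has to be secured by an explicit, genus-by-genus construction, and you concede you do not have one (``the delicate heart of the argument,'' ``a naive transport of the genus-$3$ template to an arbitrary base fails''). The paper's proof supplies exactly this missing piece: it fixes the single base $K'=4_1=K([2,2])$ and writes down, separately for $g$ odd and $g$ even, an explicit sequence ${\bf a}=(a_1,\dots,a_{2g})$ with entries in $\{\pm2,\pm4\}$ that is KM-antisymmetric by construction and visibly decomposes into blocks $\pm(2,2)$ joined by even junctions, i.e.\ is of type $g$ (resp.\ $g-1$) with respect to $(2,2)$.

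Part (2) has the same gap, compounded by the fact that the one concrete choice you do make is incompatible with the construction you gesture at. The natural type-$3$ over-knot---the one the paper uses---is $K=K([{\bf a},2,{\bf a}^{-1},-2,-{\bf a}])$, and rewriting this continued fraction in KM form forces the base to satisfy $a_{g'}=\pm2$, $a_1=a_{2g'}=2$, the antisymmetry $a_i=-a_{2g'-i}$ $(1\le i\le g'-1)$, and the symmetry $a_i=a_{2g'-(i-2)}$ $(2\le i\le 2g')$; the paper defines its genus-$g'$ base of $\pm2$'s precisely so that all of these hold. Your base $K([2,2,\dots,2,\pm2,-2,\dots,-2])$ violates them (its last entry is $-2$, and the symmetry $a_2=a_{2g'}$ already fails for $g'=2$), so ``the same simultaneous ORS$+$KM ansatz'' does not go through over it: you would need either a corrected base or a different over-construction together with a new KM verification, none of which is supplied. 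In short, the framework is right, but in both parts the explicit constructions that constitute the mathematical substance of the proposition are missing.
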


\begin{proof}
\begin{enumerate}
\item We define ${\bf a} = (a_1, a_2,\ldots,a_{2g})$ as follows. 
If $g$ is odd, then, 
\begin{align*}
& a_1 = 2, \quad 
a_{2k} = 2 \quad (1 \leq k \leq \frac{g-1}{2}),  \quad 
a_{2k+1} = 4 \quad (1 \leq k \leq \frac{g-1}{2}), \\
& a_{g+1} = (-1)^{(g-1)/2} \cdot 2, \quad  
a_{2g-k} = - a_{k+2} \quad (0 \leq k \leq g- 2) . 
\end{align*}
If $g$ is even, then, 
\begin{align*}
& a_1 = 2, \quad 
a_{2k} = 2 \quad (1 \leq k \leq \frac{g}{2}),  \quad 
a_{2k+1} = 4 \quad (1 \leq k \leq \frac{g-4}{2}), \\
& a_{g-1} = 2, \quad a_{g+1} = (-1)^{g/2} \cdot 2, \quad  
a_{2g-k} = - a_{k+2} \quad (0 \leq k \leq g- 2) . 
\end{align*}
Let $K = K([{\bf a}])$, then $u(K)=1$ and $g(K)=g$ by Theorem \ref{thm:km} and definition of ${\bf a}$. 
Remark that all $a_i$'s are non-zero even.  
We consider $K'= K([2,2]) = 4_1$, then $u(4_1) = 1$.  
The rational number $[{\bf a}]$ admits a continued fraction with respect to ${\bf a}$ 
of type $g$ if $g$ is odd, or of type $g-1$ if $g$ is even. 
Therefore there exists an epimorphism from $G(K)$ onto $G(K')$.

\item 
We define ${\bf a} = (a_1, a_2,\ldots,a_{2g'})$ by 
\begin{align*}
& a_{2k-1} = (-1)^{k+1} \cdot 2 \quad (1 \leq k \leq \frac{g'+1}{2}), \quad 
a_{2k} = (-1)^{k+1} \cdot 2 \quad (1 \leq k \leq \frac{g'}{2}), \\ 
& a_{2g' - (2k-1)} = (-1)^{k} \cdot 2 \quad (1 \leq k \leq \frac{g'}{2}), \quad 
a_{2g' - (2k-2)} = (-1)^{k+1} \cdot 2 \quad (1 \leq k \leq \frac{g'+1}{2}) .
\end{align*}
Let $K' = K([{\bf a}]) = K([{\bf a}^{-1}])$. 
Since all $a_i$'s are non-zero even, we see $ g(K')=g'$. 
We set 
\begin{align*}
K&= K([{\bf a}, 2,{\bf a}^{-1},-2,-{\bf a}]) \\
&= K([a_1,\overbrace{a_2,\ldots,a_{2g'},2,a_{2g'},\ldots,a_{g'+1}},a_g,
\overbrace{a_{g'-1},\ldots,a_1,-2,-a_1,\ldots,-a_{2g'}}]).
\end{align*}
Since $[{\bf a}, 2,{\bf a}^{-1},-2,-{\bf a}]$ is a continued fraction expansion of type $3$ with respect to ${\bf a}$, 
$G(K)$ admits an epimorphism onto $G(K')$. 
By definition, we have 
\[
a_{i} = - a_{2g'-i} ~ \, ~ (i=1,2,\ldots,g'-1), \quad a_{i} = a_{2g' - (i-2)} ~ \, ~ (i = 2,3,\ldots,2g'), \quad a_g' = \pm 2 . 
\]
Therefore, it turns out that $u(K)=  u(K') = 1$. 
\end{enumerate}
\end{proof}

\begin{remark}
It is assumed that the genus of $K$ is at least $3$ in Proposition \ref{prop:gg} (1). 
The previous paper \cite{suzukitran} shows that $2$-bridge knots of genus $1$ are always minimal.
Here a knot $K$ is called {\it minimal} if $G(K)$ admits an epimorphism 
only onto $G(K)$ and the knot group of the trivial knot. 
Moreover, $2$-bridge knots of genus $2$ are also minimal except for $K([2a,4b,4a,2b])$. 
Actually, $G(K([2a,4b,4a,2b]))$ admits an epimorphism onto $G([2a,2b])$. 
However, $K([2a,4b,4a,2b])$ has unknotting number at least $2$ for $|a|,|b| \leq 50$. 
\end{remark}

\begin{problem}
Let $K,K'$ be $2$-bridge knots with unknotting number one. 
If $G(K)$ admits an epimorphism onto $G(K')$, are the following inequalities 
\[
g(K) \geq 3 g(K'), \quad c(K) \geq 3 c(K') + 1 
\] 
satisfied?
\end{problem}

We can verify that 
any $2$-bridge knot groups with up to $50$ crossings and unknotting number one 
does not admit an epimorphism onto a $2$-bridge knot group with unknotting number more than one.  
Murasugi in \cite{murasugi3} showed that $2 u(K) \geq | \sigma (K) |$, 
where $\sigma (K)$ is the {\it signature} of a knot $K$. 
The existence of an epimorphism $G(K) \to G(K')$ does not imply $\sigma (K) \geq \sigma (K')$. 
For example, as mentioned in Example \ref{ex:3162}, there exists an epimorphism $G(10_{32}) \to G(3_1)$, however, 
$| \sigma (10_{32}) | = 0$ and $| \sigma(3_1) | = 2$. 
Moreover, for any $2$-bridge knot $K'$, we can construct an epimorphism $G(K) \to G(K')$ 
such that $\sigma (K)=0$. 
We state the following again as a concluding problem. 

\begin{problem}
Does the existence of an epimorphism from $G(K)$ onto $G(K')$ imply $u(K) \geq u(K')$, 
in particular, for $2$-bridge knots? 
\end{problem}

\section*{Acknowledgments}
The author wishes to express his thanks to Makoto Sakuma for helpful communications for Section \ref{sect:degone}. 
This work is partially supported by KAKENHI grant No.\ 20K03596 and 19H01785 
from the Japan Society for the Promotion of Science.

\end{document}